\documentclass{amsart}
\usepackage{amscd, amssymb, amsmath, amsthm}
\usepackage{amsfonts,enumerate}
\usepackage{latexsym,,txfonts}
\usepackage{graphics,graphicx,psfrag}
\usepackage[colorlinks=true,citebordercolor={0 1 0}]{hyperref}

\newtheorem{theorem}{Theorem}[section]
\newtheorem{lemma}[theorem]{Lemma}

\newtheorem{cor}[theorem]{Corollary}

\theoremstyle{definition}
\newtheorem{definition}[theorem]{Definition}

\theoremstyle{remark}

%%% For the MathReview links 

\def\MR#1{\href{http://www.ams.org/mathscinet-getitem?mr=#1}{MR#1}}
\def\checkMR MR#1#2#3 #4\relax%
  {\ifx#1M%
     \ifx#2R\MR{#3}\else\MR{#1#2#3}\fi
   \else
     \MR{#1#2#3}%
   \fi}

\begin{document}

\title{Chainmail links and $L$-spaces}

\author[Ian Agol]{%
        Ian Agol} 
\address{%
    University of California, Berkeley \\
    970 Evans Hall \#3840 \\
    Berkeley, CA 94720-3840} 
\email{% 
     ianagol@math.berkeley.edu}

\thanks{Ian Agol is supported by a Simons Investigator grant  \#376200}
\subjclass[2010]{57M}

\date{% 
June 2023}

\begin{abstract}
In this note we prove that alternating chainmail links are $L$-space links. The proof is inspired by corresponding proofs for double branched covers of alternating links. We also more generally show that flat augmented chainmail links are generalized $L$-space links. Some other properties of these links are also considered. 
 
\end{abstract}

\maketitle

%\tableofcontents

\section{Introduction}
In the process of exploring lens space Dehn fillings on knots, Ozsvath and Szabo introduced the notion of {\it $L$-spaces }\cite[Definition 1.1]{MR2168576} and proved that double branched covers of non-split alternating links are $L$-spaces \cite[Lemma 3.2, Proposition 3.3]{MR2141852}. $L$-spaces are closed rational homology 3-spheres $Y$ whose Heegaard Floer homology is minimal (like a lens space), in the sense that $rank(\widehat{HF}(Y))=|H_1(Y;\mathbb{Z})|$. $L$-spaces are interesting manifolds since they do not admit taut orientable foliations \cite[Theorem 1.4]{OZ0311496}, \cite[Corollary 1.6]{MR3693573}; the converse has been conjectured for irreducible closed 3-manifolds by Juhasz \cite[Conjecture 5]{MR3381327}. Related concepts have been defined for other versions of Floer homology, for example an {\it instanton $L$-space} $Y$ satisfies $dim I^\#(Y) = |H_1(Y;\mathbb{Z})|$ \cite[Definition 1.13]{Baldwin_2022}. It is conjectured that $dimI^\#(Y)=dim \widehat{HF}(Y;\mathbb{C})$ \cite[Conjecture 7.24]{MR2652464}. 

Quasi-alternating links \cite[Definition 3.1]{MR2141852} are a class of links whose double branched covers are $L$-spaces, leading to  the question of whether there are $L$-spaces which are not double branched covers of links? Examples of hyperbolic $L$-spaces were given which do not admit any symmetries, and hence cannot be double branched covers of links \cite[Theorem 1.2]{MR3507256}. Subsequently asymmetric hyperbolic $L$-space knots were discovered leading to further examples \cite{MR4194294}, \cite{anderson2021lspace}. Meanwhile, Boyer-Gordon-Watson conjectured that $L$-spaces do not have orderable fundamental group \cite[Conjecture 1]{MR3072799}, and they proved this for branched double covers of alternating links \cite[Theorem 4]{MR3072799}, with a subsequent simpler proof given by Greene \cite[Theorem 2.1]{MR3778170}. Greene's proof relies on a presentation of the fundamental group associated to a certain Dehn filling on a link described by Ozsvath-Szabo which gives a Kirby diagram for a double branched cover of a link (described in \cite[Section 3.1]{MR3065184}). 

Polyak introduced the notion of chainmail graphs and corresponding chainmail links, such that surgery on this class of links  gives all closed 3-manifolds \cite{Polyak}. He indicated that there exists a finite set of moves on chainmail graphs (analogous to Reidemeister moves) generating the equivalence relation of chainmail graphs giving the same 3-manifold. This is just a restricted class of Kirby diagrams, motivated by an alternative proof of Kirby's theorem given by Matveev-Polyak using a tangle presentation for the mapping class group \cite{MR1266062}. 
The surgery diagrams of Ozsvath-Szabo for double branched covers of links (mentioned in the last paragraph) are certain surgeries on chainmail links. We learned about these topics from a Mathoverflow post by Lucas Culler \cite{315927}.

Gorsky and Nemethi introduced the terminology of $L$-space link (links such that sufficiently large positive surgeries are $L$-spaces) and proved that algebraic links are $L$-space links \cite[Theorem 2]{MR3546454}. Subsequently Yajing Liu explored the properties of $L$-space links and introduced the terminology of {\it generalized $L$-space links} which also admit infinitely many $L$-space fillings \cite[Definition 2.9]{MR3692910}. He asked whether every 3-manifold is a surgery on a generalized $L$-space link \cite[Question 1.19]{MR3692910}? The main result of this paper is that alternating chainmail links are $L$-space links. Thus each double branched cover associated to an alternating diagram of a link fits into infinite families of $L$-spaces. 
 More generally, we prove that fully augmented chainmail links are generalized $L$-space links, hence answering Liu's question positively by Polyak's observation that every 3-manifold is surgery on a chainmail link (and hence on a fully augmented chainmail link). Since the symmetries of flat augmented links are understood, we may also therefore conclude the existence of asymmetric $L$-space links with arbitrarily many components. 

We show that certain positive surgeries on negative alternating chainmail links do not have orderable fundamental group. The proof follows closely the strategy of Greene, using a related presentation of the fundamental group. Finally, we note that all known examples in the literature of $L$-space links have complements that fiber over the circle (this is true for $L$-space knots by \cite{MR2357503}). We observe that alternating chainmail links have fibered complements, continuing this trend. In the final section we ask some natural questions stemming from these examples. 

{\bf Background:} Different sections of this paper assume different background knowledge. Because some of the proofs are fairly short modifications of results in the literature, we will use standard terminology as defined in the primary sources for each section to avoid a plethora of definitions which are only used once. Hopefully the reader will be able to black-box as much as possible and refer to the sources for more details when necessary. 

{\bf Acknowledgements:} We thank Qiuyu Ren, Joshua Greene, Eugene Gorsky, Zhenkun Li and Francesco Lin for helpful comments. 

\section{Chainmail links}

\subsection{Chainmail graphs}

Polyak introduced the terminology of {\it chainmail link} \cite{Polyak} and pointed out that a result of Matveev-Polyak implies that every closed connected orientable 3-manifold is realized by surgery on a chainmail link \cite{MR1266062}. Since Polyak's results have not been published, we review some of his notation and terminology which, although not strictly necessary, will be a useful way to formalize the proofs in this paper. 

\begin{definition} A {\it chainmail graph} is a finite planar graph $G \subset S^2$ whose edges and vertices are decorated with weights in $\mathbb{Z}$. Note that $G$ may have loops and multiedges. Each vertex $v\in V(G)$ is decorated with a weight $\nu(v)\in \mathbb{Z}$, and each edge $e \in E(G)$ is given a weight $\epsilon(e)\in\mathbb{Z}$, where $V(G)$ and $E(G)$ are the set of vertices and edges of $G$ respectively. Thus a chainmail graph consists of a triple $(G,\nu, \epsilon)$, but much of the time we will abbreviate this just as $G$ with the weights implicit (see also \cite{arXiv:2110.13949} for this notation). 
\end{definition}

% Further terminology of Polyak is that a chainmail graph $G$ is {\it balanced} if $\nu(v)=0, \forall v\in V(G)$. 

Given a chainmail graph $G$, one may define a {\it chainmail link} $L_G$ in two steps in the following way. 
For each $v_i\in V(G)$, place a planar unknot $L_i$. Orient $L_i$ counterclockwise. For each edge $e$ connecting vertices $v_i, v_j$, introduce a $\epsilon(e)$-clasped ribbon linking $L_i$ and $L_j$. Let $w_{ij}$ be the sum of edge weights of edges connecting $v_i$ and $v_j$, with the convention that if distinct vertices $v_i$ and $v_j$ are not connected by an edge, then define $w_{ij}=0$. 

\includegraphics[width=4in]{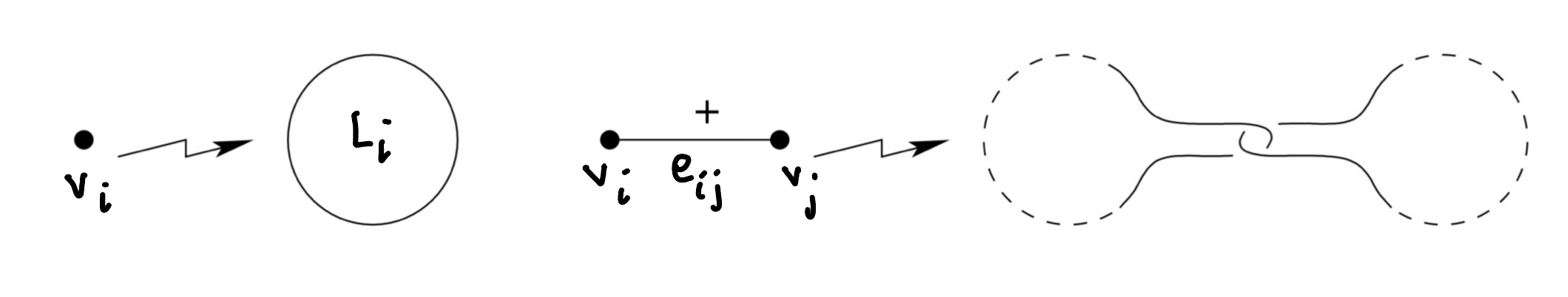}

Now we assign framings to the components $L_i$ with framing $w_{ii}= \nu(v_i)-\sum_{k\neq i} w_{ik}$. This framing is chosen in such a way that the linking number of the framing slope of $L_i$ with the chainmail link is $w_{ii}$. In turn this gives a framing matrix $\Lambda(G) = (w_{ij})$.  Surgery on $L_G$ with these framings gives a 3-manifold $M_G$. Moreover, $det(\Lambda(G))\neq 0$ iff $M_G$ is a rational homology 3-sphere, in which case $|H_1(M_G;\mathbb{Z})|=|det(\Lambda(G))|$. 
We may also view $\Lambda(G)$ as the intersection form of $H_2(W_G)$ of the 4-manifold $W_G$ obtained by adding two handles to the 4-ball along the link $L_G$ with given framings. 

One special property of chainmail links associated to simplicial graphs is that the crossing number is the minimal possible given the linking numbers between components. Chainmail graphs generalize plumbing diagrams for graph manifolds studied by Scharf \cite{MR377918} and Fintushel-Stern \cite{MR595630} who considered the case of trees with edges of weight $\pm 1$ and allowing rational weights on vertices (note however that the vertex labeling convention here may be different). 

We briefly review some of the notation from \cite{MR1266062}.
Consider framed tangles $T_{2n}$ in $\mathbb{R}^2\times [0,1]$ with $2n$ framed endpoints on $\mathbb{R}^2\times \{1\}$ and on $\mathbb{R}^2\times \{0\}$. We may draw these tangles with blackboard framing, and compose the tangles by stacking. 

\begin{figure}[htb]\includegraphics[width=3in]{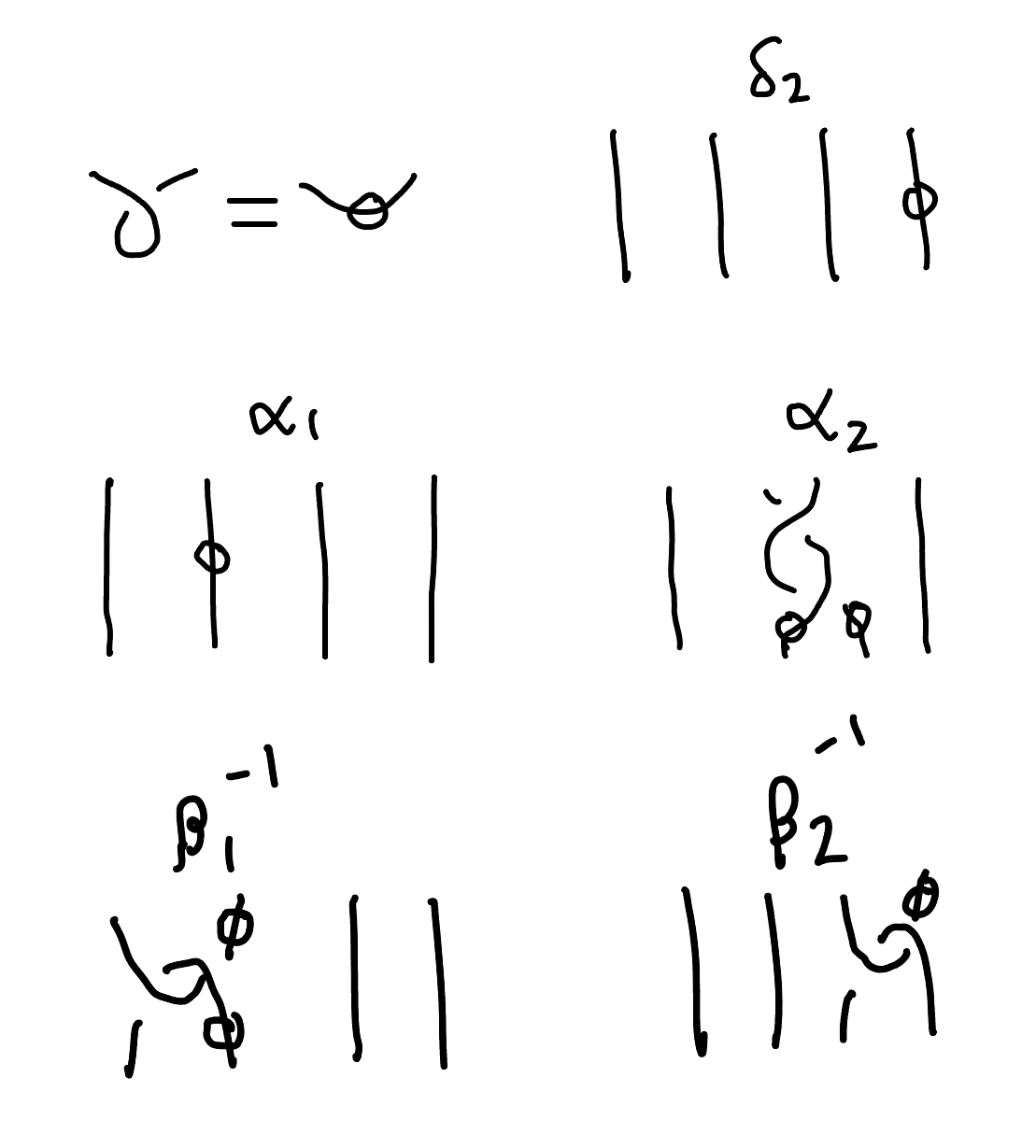}

\caption{Generators for the genus 2 mapping class group} \label{tangles}

\end{figure}

\begin{figure}[htb]\includegraphics[width=2in]{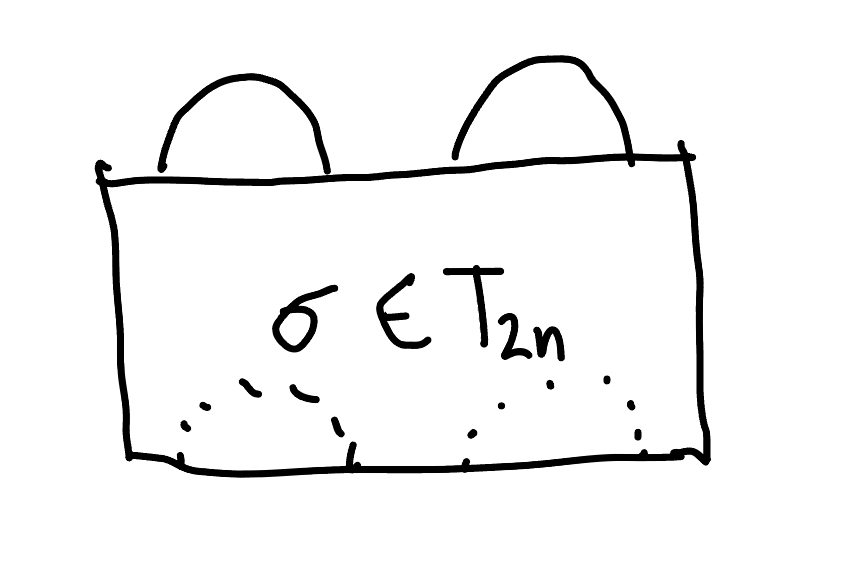}

\caption{Closure of a tangle} \label{closure}

\end{figure}

One may close up a tangle in $T_{2n}$ to get a framed link as in Figure \ref{closure} by adding strands on top of $\mathbb{R}^2\times\{1\}$ and removing the strands meeting the bottom $\mathbb{R}^2\times\{0\}$, and hence a Kirby diagram for a 3-manifold. Matveev and Polyak prove that one may obtain all genus 2 manifolds by stacking the tangles and their mirror images in Figure \ref{tangles}, then closing. In fact they show that these tangles generate the genus two mapping class group when one mods out by Kirby moves, such that the generators in Figure \ref{tangles} correspond to the Dehn twists about curves in Figure \ref{Heegaard}.  For example, the manifold associated to $\beta_2^{-1}\alpha_1\alpha_2\beta_1^{-1}\delta_2\beta_2^{-1}\alpha_1\alpha_2\beta_1^{-1}\beta_2^{-1}$ is shown in Figure \ref{genus2}. 

\begin{figure}[htb]\includegraphics[width=2in]{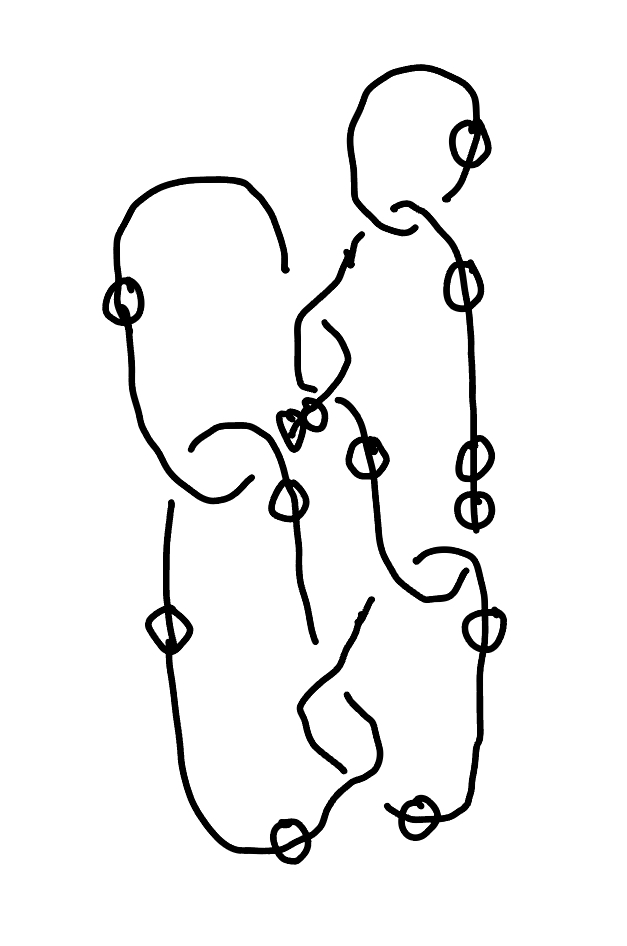}

\caption{A plat closure associated to the tangle $\beta_2^{-1}\alpha_1\alpha_2\beta_1^{-1}\delta_2\beta_2^{-1}\alpha_1\alpha_2\beta_1^{-1}\beta_2^{-1}$} \label{genus2}

\end{figure}

\begin{theorem}[\cite{Polyak}] \label{complete}
Any closed oriented 3-manifold $Y\cong M_G$ for some chainmail graph $G$. 
\end{theorem}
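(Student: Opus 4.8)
The plan is to build on the Matveev--Polyak tangle calculus, upgrading their genus $2$ statement to arbitrary genus and then reading off a chainmail description directly from a Heegaard splitting. I would start from the standard fact that every closed oriented $3$-manifold $Y$ admits a Heegaard splitting $Y = H_g \cup_\phi H_g$, where $H_g$ is the genus $g$ handlebody and $\phi \in \mathrm{Mod}(\Sigma_g)$ is the gluing map of the splitting surface $\Sigma_g = \partial H_g$. By Lickorish's theorem, $\phi$ can be written as a product $\tau_{c_1}^{\pm 1}\cdots \tau_{c_N}^{\pm 1}$ of Dehn twists about curves $c_1,\dots,c_N$ drawn from a fixed finite generating collection on $\Sigma_g$; for $g=2$ these are exactly the curves of Figure \ref{Heegaard} realized by the tangles of Figure \ref{tangles}. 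Note that a reduction to the genus $2$ case alone cannot suffice, since manifolds such as $T^3$ have Heegaard genus $3$, so the genus $g$ generating system must be used.

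Next I would realize this word as surgery. Embed $\Sigma_g$ in $S^3$ as the boundary of the standard unknotted handlebody, so that $S^3 = H_g \cup_{\mathrm{id}} H_g'$. Each Dehn twist $\tau_{c}^{\pm 1}$ is effected by a $\mp 1$-framed surgery on a copy of $c$ pushed slightly off $\Sigma_g$, and performing all $N$ of these surgeries changes the gluing from $\mathrm{id}$ to $\phi$, hence turns $S^3$ into $Y$. This is precisely the closure operation of Figure \ref{closure} applied to the stack of tangle generators representing $\phi$: stacking corresponds to composition of mapping classes, and the closure corresponds to the Heegaard gluing.

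It then remains to recognize the resulting framed link as a chainmail link $L_G$. For the standard embedding of $\Sigma_g$, the generating curves project to a planar family of round circles, each bounding a disk, so their pushoffs are planar unknots $L_i$, one per vertex $v_i$ of a planar graph $G$; two such unknots link exactly when the corresponding curves intersect on $\Sigma_g$, and this linking is realized by clasped ribbons as in the chainmail construction. Repetitions and signs of the twists in the word for $\phi$ are absorbed into the integer edge weights $\epsilon(e)$ (the clasp numbers) and into the vertex weights $\nu(v_i)$ governing the self-framings via $w_{ii} = \nu(v_i) - \sum_{k \neq i} w_{ik}$. Thus $Y \cong M_G$.

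The main obstacle is this last step: verifying that the surgery diagram produced by the Dehn-twist word is genuinely a chainmail link in Polyak's sense, rather than merely some surgery presentation. Concretely, one must isotope the pushoff curves so that each component is an honest planar unknot and each pairwise linking is carried by a clasped ribbon matching a planar graph, while checking that the framings obey the $w_{ii}$ convention. This is exactly the content of the Matveev--Polyak tangle calculus in genus $2$; the general case follows either by running the same calculus for the genus $g$ generating system, or inductively by stabilizing and assembling the genus $g$ picture from the genus $2$ tangle blocks. Orientation issues are handled by allowing both a tangle and its mirror image, which corresponds to the two signs of the clasps and of the integer weights.
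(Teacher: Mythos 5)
You follow the same overall route as the paper---Heegaard splitting, Dehn-twist word, Matveev--Polyak tangle calculus---but the concrete construction you describe breaks down exactly at the step you identify as the main obstacle, and the breakdown is not repairable by isotopy. The error is the claim that the level-wise pushoffs ``link exactly when the corresponding curves intersect on $\Sigma_g$,'' with clasps matching a planar graph. Linking numbers of pushoffs in a collar are governed by the Seifert pairing of the embedded surface, not the intersection pairing: if $x^{+},x^{-}$ denote pushoffs of $x$ to the two sides, then $lk(x^{+},y)-lk(x^{-},y)=x\cdot y$, so for curves meeting once the linking of their pushoffs is $\pm 1$ for one ordering of the levels and $0$ for the other. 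This order-dependence can produce non-planar linking patterns. Concretely, take the word $\alpha_1^{3}\beta_1^{3}$ (or $\beta_1^{3}\alpha_1^{3}$, whichever your composition convention places the three $a_1$-pushoffs at levels outside the three $b_1$-pushoffs). Each $a_1$-pushoff bounds a meridian disk crossed once by each of the three $b_1$-pushoffs, while same-curve pushoffs are unlinked; hence all nine mixed linking numbers are $\pm 1$ and the linking graph of this six-component surgery link is $K_{3,3}$. But every chainmail link has planar linking graph, since nonzero linking numbers occur only between components joined by an edge of the planar graph $G$. Linking numbers are isotopy invariants, so your link is not isotopic to any chainmail link, for any choice of graph and weights. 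The verification you propose (``isotope the pushoff curves so that \dots each pairwise linking is carried by a clasped ribbon matching a planar graph'') is therefore impossible for the link you have built.

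The fix requires changing the link, not isotoping it: one must perform handle slides (for instance, sliding parallel pushoffs over one another to localize the clasping into a chain) to convert the naive diagram into the Matveev--Polyak plat normal form, and it is that Kirby-equivalent diagram---not the naive pushoff diagram---which is visibly an integral surgery on a chainmail link. Producing such a normal form for an arbitrary word, in arbitrary genus, is precisely the content of \cite[Theorem 7.1]{MR1266062}, which is all the paper's one-line proof invokes. So your preliminary reductions do not actually shorten the path: either you import that theorem wholesale, in which case the Heegaard/Lickorish discussion is subsumed by it, or you must supply the normal-form argument yourself, which your sketch does not do.
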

\begin{proof}
The plat representation of a Kirby diagram for a 3-manifold $Y$ given in  \cite[Theorem 7.1]{MR1266062} is easily seen to be an integral surgery on a chainmail link \cite{Polyak} as in Figure \ref{genus2}. 
\end{proof}

We remark on a few moves on chainmail graphs that give rise to the same manifold. These may be verified by using flypes of the corresponding diagram keeping track of framings or Kirby calculus. A zero-weighted edge may be erased. 
Two edges $e_1,e_2$ connecting the same pair of vertices may be redrawn as one edge of weight $\epsilon(e_1)+\epsilon(e_2)$. A loop may be erased. A vertex $v$ with weight $\nu(v)=0$ of degree 1 adjacent to the edge $e$ with $\epsilon(e)=\pm1$ may be removed along with the edge $e$ (see Figure \ref{moves}; see also \cite[Theorem 18.3]{MR817982}). 
Even though we may get rid of loops and multiedges, it is convenient to allow them in chainmail graphs since they can get created under edge contraction. 

\begin{center}
\begin{figure}[htb]\includegraphics[width=4in]{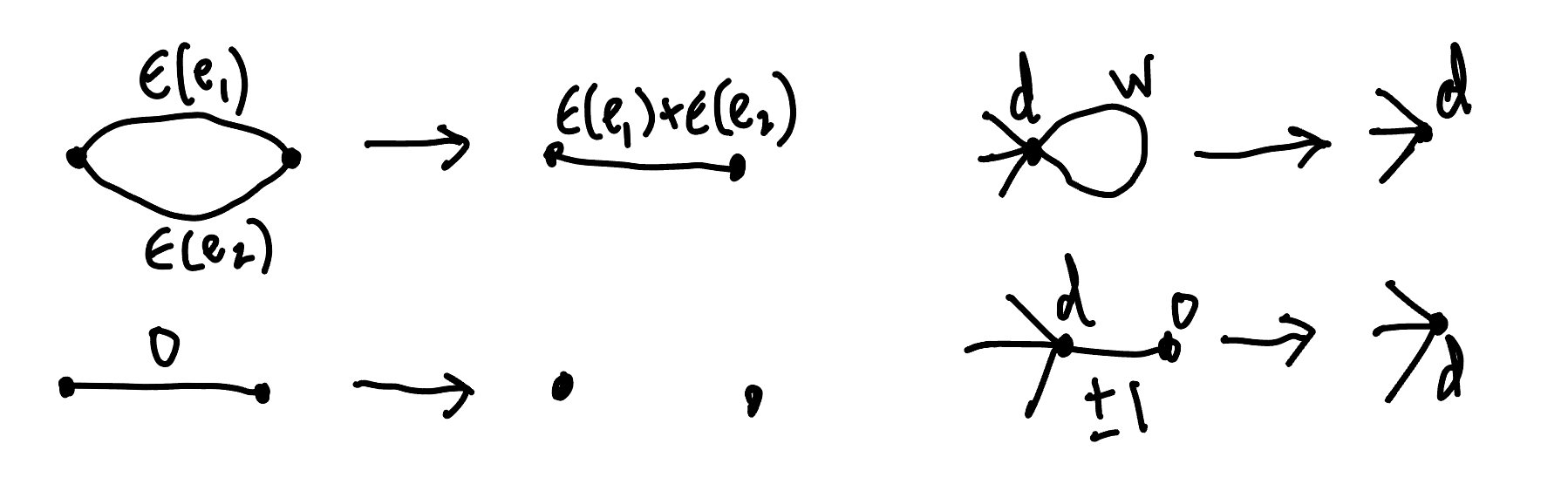}

\caption{Moves on chainmail graphs $G$ preserving the 3-manifold $M_G$} \label{moves}

\end{figure}
\end{center}

\subsection{Alternating chainmail links}

When all of the edge weights of a chainmail graph $G$ are of the same sign, then the associated chainmail link is alternating, see Figure \ref{graph_to_link}.  If the edge weights are all negative, then we call the associated link a {\it negative alternating chainmail link}. If the graph is balanced and connected ($\nu(v)=0$ for all $v\in V(G)$), then it was shown by Ozsvath-Szabo that $M_G$ is homeomorphic to the connected sum of the 2-fold branched cover of the link $K_G$ and $S^2\times S^1$, where $K_G$ is obtained from $G$ as in Figure \ref{medial} \cite[\textsection 3.1]{MR3065184}. 

\begin{center}
\begin{figure}[htb]\includegraphics[width=4in]{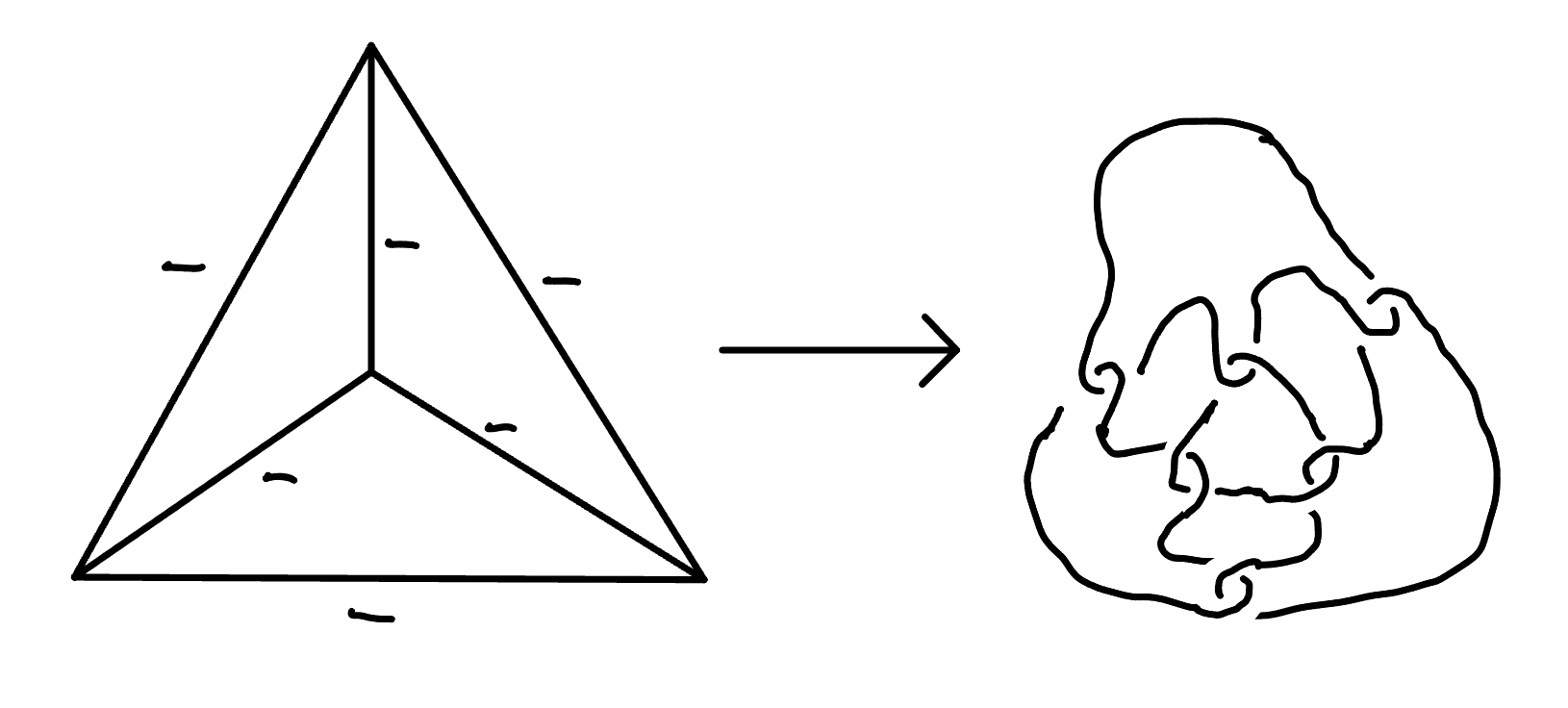}

\caption{From a graph with negative edge weights to an alternating chainmail link} \label{graph_to_link}

\end{figure}
\end{center}

\begin{center}
\begin{figure}[htb]\includegraphics[width=4in]{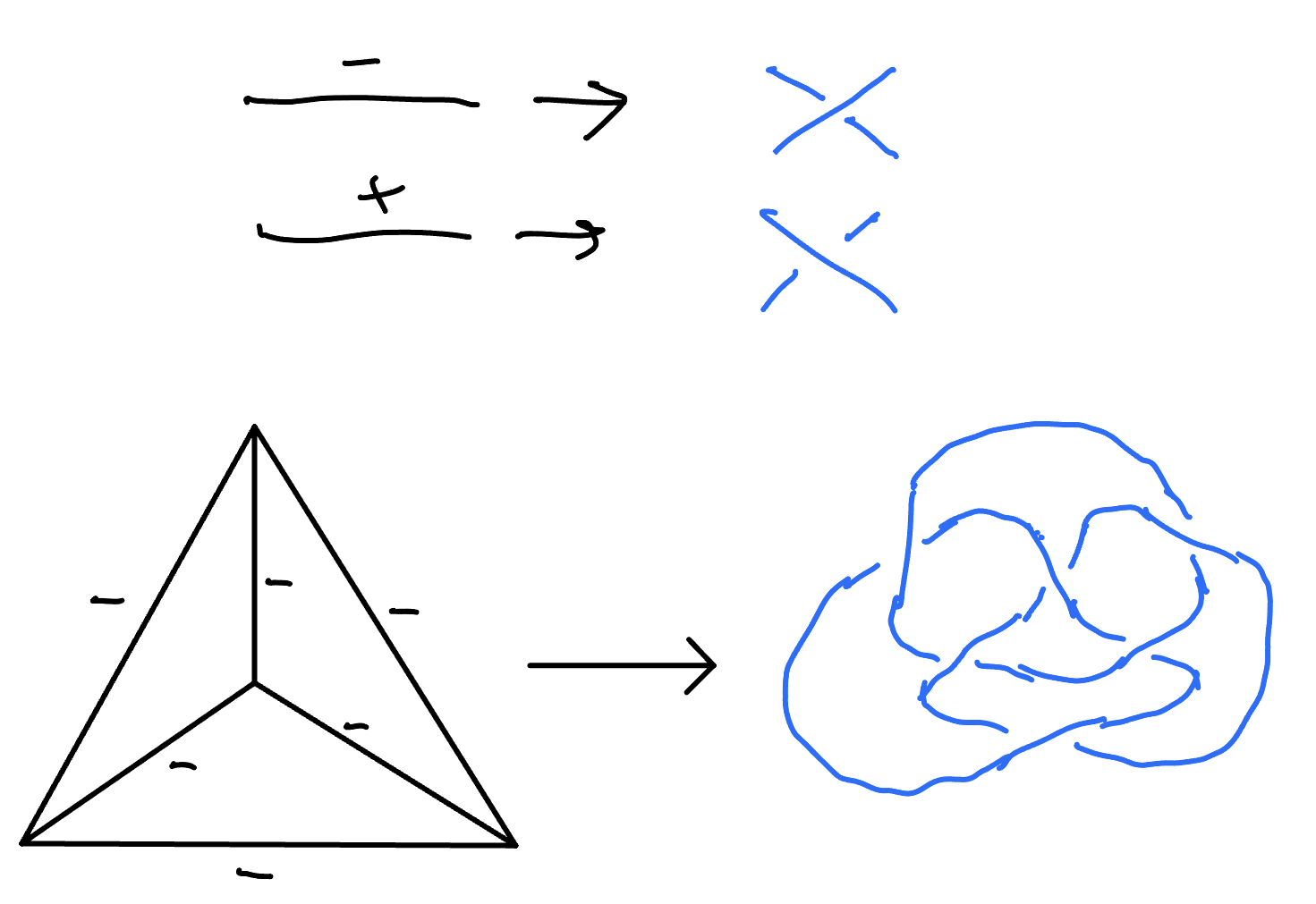}

\caption{From an edge-weighted graph $G$ to a link $K_G$} \label{medial}

\end{figure}
\end{center}

\subsection{Augmented chainmail links}
 
Assume the graph $G$ is simplicial (so that there are no loops or edges with the same endpoints). We may insert crossing circles into $L_G$ then twist to get an augmented link (Figure \ref{augmenting}), possibly at only some of the edges. If we insert crossing circles associated to every edge, then it is called a {\it flat fully augmented (chainmail) link}, otherwise it is a partially augmented link. Thus one obtains a flat augmented link for every planar graph (Figure \ref{graph_to_augmented}).
The labels on a chainmail graph then correspond to framing coefficients for the loops corresponding to vertices and the reciprocals edge weights of the flat fully augmented chainmail link. Thus every 3-manifold is surgery on a fully augmented (flat) link by Theorem \ref{complete}, since every chainmail link is surgery on such a link. 
 
 \begin{center}
\begin{figure}[htb]\includegraphics[width=4in]{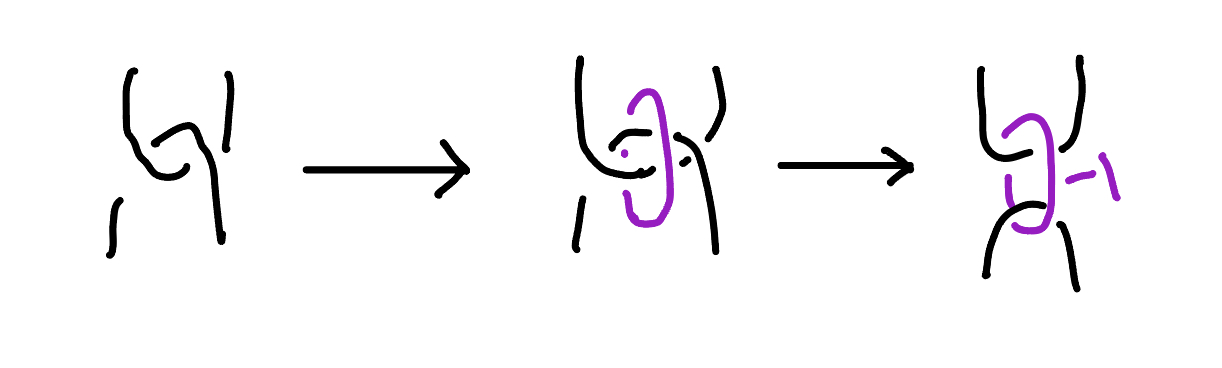}

\caption{Augmenting a chainmail link} \label{augmenting}

\end{figure}
\end{center}
 
\begin{center}
\begin{figure}[htb]\includegraphics[width=4in]{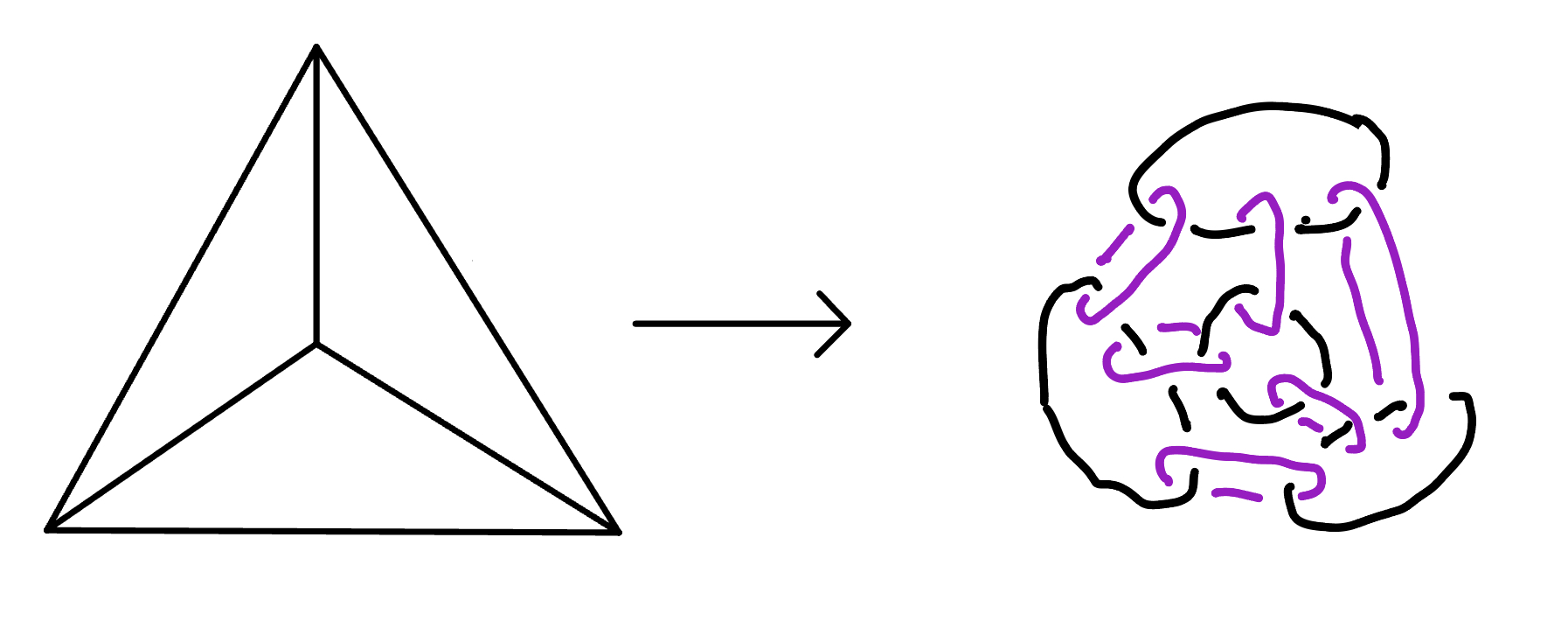}

\caption{Going from a planar graph to a flat augmented link} \label{graph_to_augmented}

\end{figure}
\end{center}
 
The following theorem may be deduced from the main result of \cite{millichap2023flat}:
\begin{theorem}  
A symmetry of a flat fully augmented link complement which sends crossing loops to crossing loops is induced by a symmetry of the link in $S^3$. 
\end{theorem}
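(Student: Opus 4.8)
The plan is to pass from topological symmetries to isometries, and then use the canonical totally geodesic surfaces of a flat fully augmented link complement to show that such a symmetry preserves the Dehn filling slopes that reconstruct $(S^3,L)$. Let $M = S^3\setminus L$ be the complement of a flat fully augmented link $L$. First I would use that $M$ is hyperbolic of finite volume, so by Mostow--Prasad rigidity any self-homeomorphism of $M$ is properly isotopic to an isometry; hence it suffices to treat an isometry $\phi$ of $M$. A self-homeomorphism $h$ of $S^3$ with $h(L)=L$ restricts to such a symmetry, and conversely $\phi$ is induced by a symmetry of the link exactly when it extends over the meridian Dehn fillings that recover $S^3$, that is, when $\phi$ carries the meridian slope of each cusp to a meridian slope. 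So the entire problem reduces to showing that an isometry which sends crossing loops to crossing loops preserves all meridians.

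The key geometric input is that each crossing circle $C$ bounds a crossing disk $D_C$ which, inside $M$, is a thrice-punctured sphere: its boundary on the cusp of $C$ is the longitude $\lambda_C$, while its other two cusps meet the two knot strands through $C$ along their meridians. By Adams' rigidity of thrice-punctured spheres these $D_C$ are totally geodesic and are permuted by any isometry, and the hypothesis that $\phi$ sends crossing loops to crossing loops guarantees that $\phi$ sends each $D_C$ to $D_{\phi(C)}$ (rather than to some unrelated totally geodesic thrice-punctured sphere). On every knot cusp this already yields what I want: because $L$ is fully augmented and $M$ is hyperbolic, each knot component is pierced by crossing disks, so the meridian slope on a knot cusp is cut out by these $\phi$-permuted thrice-punctured spheres, and therefore $\phi$ preserves meridians on all knot cusps. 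It remains only to handle the crossing cusps, where the crossing disks record the longitude $\lambda_C$ instead of the meridian $\mu_C$.

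For the crossing cusps I would invoke the second canonical surface of the flat structure, namely the projection plane. Flatness is precisely the condition that makes reflection through the projection plane an isometry and makes the projection surface totally geodesic, so that cutting $M$ along the crossing disks and the projection plane produces the canonical two-polyhedron decomposition. The projection plane meets each crossing cusp in a slope transverse to $\lambda_C$ (indeed the crossing cusps are rectangular, with meridian and longitude as the two orthogonal geodesics), so the crossing disk and the projection plane together cut out a basis of each crossing cusp; an isometry permuting these canonical surfaces preserves both slopes, in particular the meridian $\mu_C$. Combined with the previous paragraph, $\phi$ preserves every meridian and hence extends to a symmetry of $(S^3,L)$. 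The one step that genuinely requires the analysis of \cite{millichap2023flat} is the claim that an isometry respecting the crossing loops must respect this canonical decomposition and in particular fix the meridian direction on the crossing cusps; I expect this identification of the meridian as geometrically canonical to be the main obstacle, since for general hyperbolic link complements symmetries need not preserve meridians.
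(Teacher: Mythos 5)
The first thing to note is that the paper does not actually prove this statement: its ``proof'' consists of the single remark that the theorem may be deduced from the main result of \cite{millichap2023flat}. So the comparison is between your sketch and that citation. Your geometric skeleton is the right one, and it uses the same toolkit as the cited paper: reduce via Mostow--Prasad rigidity to isometries (note this silently assumes the complement is hyperbolic, which is not part of the stated hypotheses and fails for some flat fully augmented links), observe that a homeomorphism of the complement is induced by a symmetry of $(S^3,L)$ exactly when it takes meridian slopes to meridian slopes, and then certify meridians using the canonical totally geodesic surfaces: the crossing disks and the reflection surface. Your identification of the slopes these surfaces cut out on each cusp (meridians on knot cusps and longitudes on crossing cusps from the crossing disks, meridians on crossing cusps from the reflection surface) is correct.

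However, as a standalone argument the proposal has two gaps, and they sit exactly where the content of \cite{millichap2023flat} lives. First, you assert that the hypothesis that crossing loops go to crossing loops ``guarantees'' $\phi(D_C)=D_{\phi(C)}$. That is not automatic: an isometry only sends totally geodesic thrice-punctured spheres to totally geodesic thrice-punctured spheres, and a general flat fully augmented complement contains thrice-punctured spheres other than crossing disks --- for instance the flat disk bounded by a knot circle passing through exactly two crossing circles, or a second twice-punctured disk spanning a crossing circle when the underlying graph has a separating edge. This is precisely why, in its later theorem on asymmetric links, the paper imposes ``no triangles, no degree-2 vertices, no separating edge'' and invokes \cite[Proposition 2.7]{millichap2023flat} to conclude that the only thrice-punctured spheres are crossing disks. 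A cusp-pattern count (a crossing disk has one boundary on a crossing cusp and two on knot cusps, whereas the degree-2 surfaces have the opposite pattern) would eliminate some competitors, but you do not give it, and it does not eliminate all of them; what is really needed is a classification of thrice-punctured spheres, which must be imported from the cited paper. Second, the step you do flag --- that an isometry respecting crossing loops respects the reflection surface, hence the meridian on each crossing cusp --- is not a small missing lemma: together with the first point it essentially \emph{is} the theorem, so deferring it to \cite{millichap2023flat} makes your write-up an outline of the deduction rather than a proof. Since the paper itself outsources everything to the same reference, your proposal is a reasonable reconstruction of what that deduction must look like, but it should not present the crossing-disk step as immediate, and it should state the hyperbolicity hypothesis explicitly.
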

This will be used in the proof of Theorem \ref{asymmetric}.
 
\section{Alternating chainmail links are $L$-space links}
In this section, we prove that alternating chainmail links are $L$-space links. Although Theorem \ref{alternating} is a corollary of Theorem \ref{augmented} by taking the coefficients on each crossing loop to be $-1$, we prove it separately as it is serves as a base case of the induction for the more general theorem.  

\begin{definition} \cite[Definition 2.2]{MR3431635}
An $l$-component link $L\subset S^3$ is called an {\it (instanton) $L$-space link} if there exist integers $p_1,\ldots,p_l$ such that $S^3_{n_1,\ldots,n_l}(L)$ is an (instanton) $L$-space for all $n_1,\ldots,n_l$ with $n_i>p_i$ for all $1\leq i\leq l$. 
\end{definition} 

Given a chainmail graph $(G,\nu,\epsilon)$, we may form minors. For an edge $e\in E(G)$ with $\epsilon(e)=-1$, let $G-e$ be the graph obtained by deleting the edge $e$ and keeping the weight functions the same on vertices and edges. If $e$ is not a loop, and $e$ has endpoints $v_1, v_2$, let $G/e$ be the graph obtained by quotienting $e$ to a point (which preserves planarity). The weight function on vertices $V(G)-\{v_1,v_2\}$ is the restriction of $\nu$, and the weight function $\epsilon$ on edges are the same. There is a new vertex $v'$ of $G/e$ obtained by identifying $v_1$ and $v_2$. We define $\nu(v')=\nu(v_1)+\nu(v_2)$ (Figure \ref{minors}).  

\begin{center}
\begin{figure}[htb]\includegraphics[width=4in]{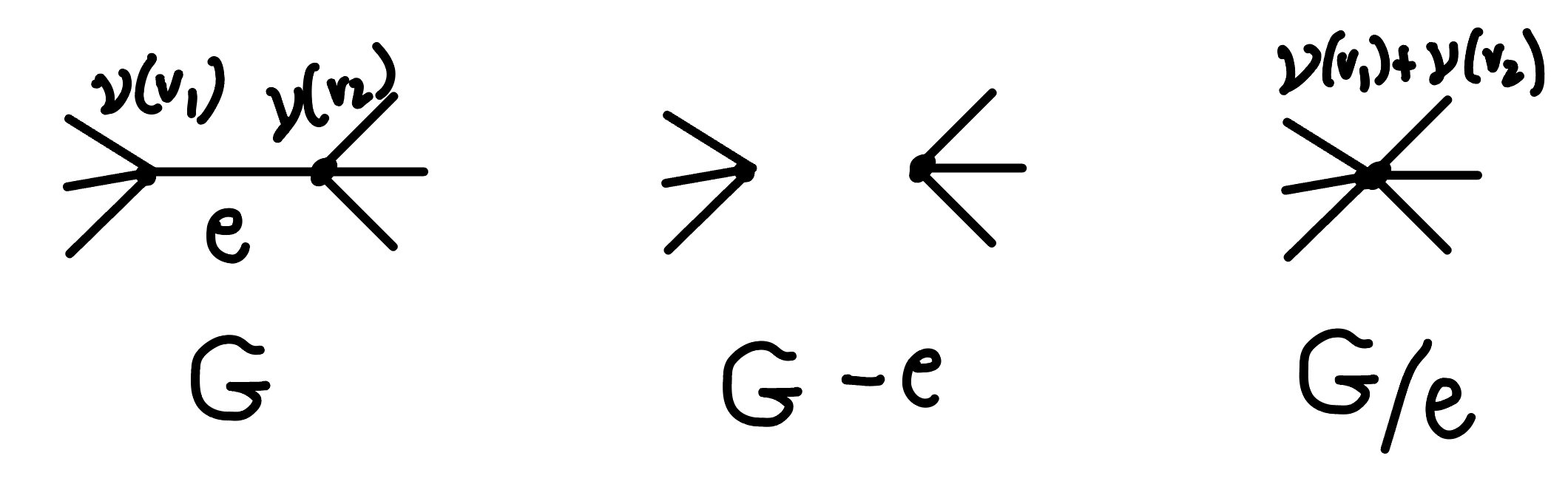}

\caption{Minors of $G$ obtained by deleting and contracting the edge $e$} \label{minors}

\end{figure}
\end{center}

Let us see how the framed links / 3-manifolds associated to the graphs $G, G-e, G/e$ differ, with $\epsilon(e)=-1$. Consider the augmented link obtained by inserting a crossing loop $c$ about $L_1$ and $L_2$, the components corresponding to $v_1$ and $v_2$, then do a twist to remove the double crossing.  Then $L_G$ corresponds to doing $-1$ surgery on $c$, $L_{G-e}$ corresponds to $\infty$ surgery on $c$ (which corresponds to erasing $c$), and $L_{G/e}$ corresponds to $0$ surgery on $c$ (see Figure \ref{crossing_loop}). 
This last claim follows from a Kirby calculus operation $K_3$ described in  \cite[Fig. 3 p. 539] {MR1266062}.

\begin{center}
\begin{figure}[htb]\includegraphics[width=4in]{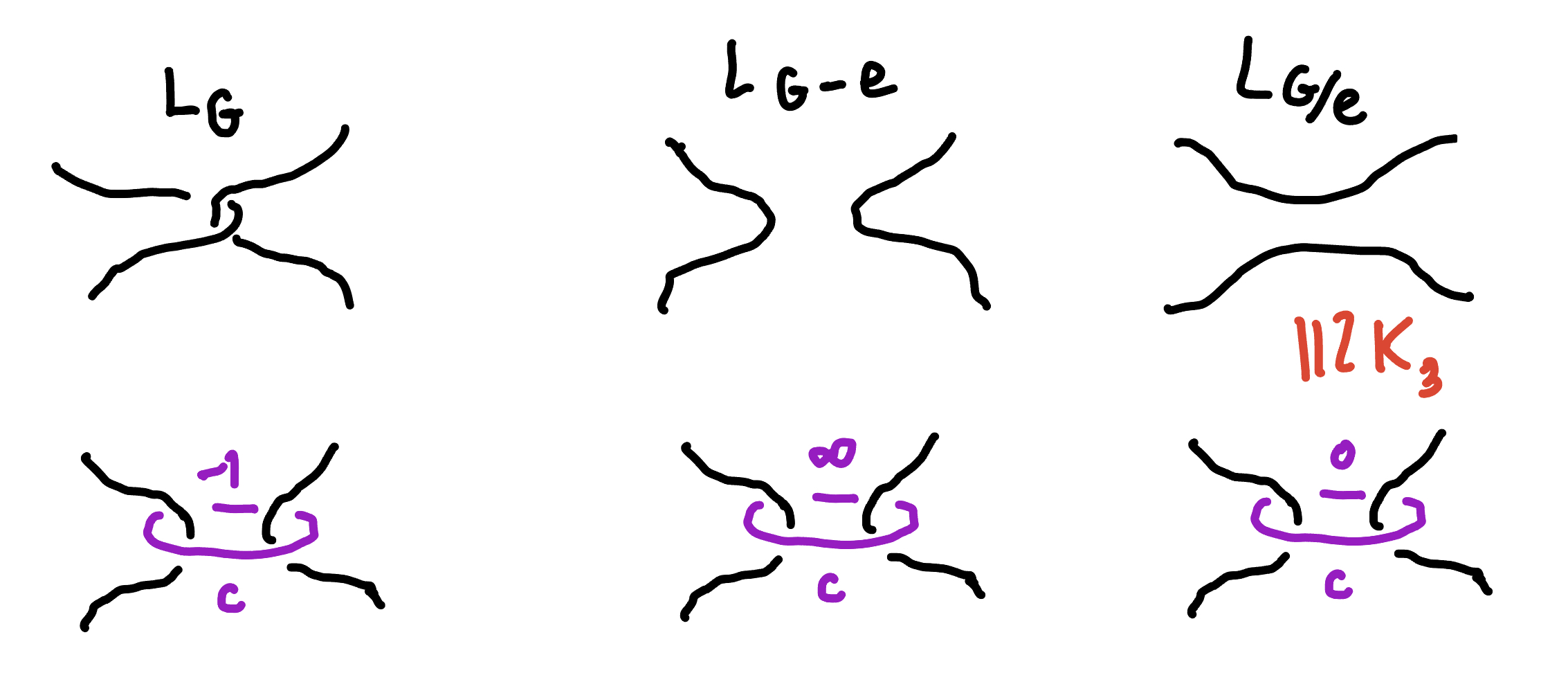}

\caption{Links and surgeries on augmented link corresponding to minors of $G$} \label{crossing_loop}

\end{figure}
\end{center}

Thus we see that $M_G, M_{G-e},$ and $M_{G/e}$ are related by surgeries on the knot $c$ with slopes which have pairwise intersection number $1$. Hence we have an exact triangle $$\widehat{HF}(M_G)\to \widehat{HF}(M_{G-e}) \to \widehat{HF}(M_{G/e}) \to \widehat{HF}(M_G)$$ \cite[Theorem 9.16]{MR2113020}.

The following theorem generalizes \cite[Example 3.15]{MR3692910}

\begin{theorem} \label{alternating}
Negative alternating chainmail links are $L$-space links. More specifically, for a chainmail graph $G$ with all $\nu(v)\geq 0, v\in V(G)$, and at least one $\nu(v)>0$ in each component of $G$, and $\epsilon(e) < 0$ for all $e\in E(G)$,  $M_G$ is an $L$-space and an instanton $L$-space.
\end{theorem}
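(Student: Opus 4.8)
The plan is to induct on the total edge weight $W(G)=-\sum_{e\in E(G)}\epsilon(e)$, using the surgery exact triangle recorded above together with two pieces of linear algebra: positivity of the framing matrix, and a deletion--contraction identity for its determinant that mirrors the triangle. Throughout I would stay inside the class of \emph{charged} graphs (all $\nu\ge 0$, with a positive weight in each component), so that every manifold in sight is a rational homology sphere and the $L$-space condition $\mathrm{rank}\,\widehat{HF}=|H_1|$ is exactly the thing to verify.

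First I would observe that for a charged graph the row sums of $\Lambda(G)$ are exactly the weights $\nu(v_i)$, so $\Lambda(G)=L_G+\mathrm{diag}(\nu)$, where $L_G$ is the Laplacian of the graph with conductances $|\epsilon(e)|$. Since $L_G$ is positive semidefinite with kernel spanned by the indicator vectors of the components, and $\sum_i\nu(v_i)x_i^2>0$ unless $x$ vanishes on every component carrying a positive weight, $\Lambda(G)$ is positive definite. Hence $M_G$ is a rational homology sphere with $|H_1(M_G)|=\det\Lambda(G)>0$, and the standard bound $\mathrm{rank}\,\widehat{HF}(M_G)\ge|H_1(M_G)|$ supplies one inequality for free.

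The algebraic heart is the identity $\det\Lambda(G)=\det\Lambda(G-e)+\det\Lambda(G/e)$ for an edge $e=v_iv_j$ with $\epsilon(e)=-1$. I would prove it by writing $\Lambda(G-e)=\Lambda(G)-(\mathbf e_i-\mathbf e_j)(\mathbf e_i-\mathbf e_j)^{\mathsf T}$, applying the matrix--determinant lemma to this rank-one update, and identifying the adjugate term $(\mathbf e_i-\mathbf e_j)^{\mathsf T}\,\mathrm{adj}\,\Lambda(G-e)\,(\mathbf e_i-\mathbf e_j)$ with $\det\Lambda(G/e)$; the latter is $\Lambda(G-e)$ with the rows and columns of $v_i,v_j$ summed, which is precisely the framing matrix of the contraction. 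Both summands are nonnegative, so the identity is additive, matching the exact triangle. Feeding the inductive hypothesis (that $M_{G-e}$ and $M_{G/e}$ are $L$-spaces) into the triangle then gives $\mathrm{rank}\,\widehat{HF}(M_G)\le\det\Lambda(G-e)+\det\Lambda(G/e)=|H_1(M_G)|$, which with the reverse bound forces equality.

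The step I expect to be the real obstacle is keeping $G-e$ charged: deleting a bridge can split off a \emph{balanced} component (all $\nu=0$), and such a piece contributes a connected summand with $b_1>0$ whose Floer rank is twice the order of its torsion subgroup rather than its vanishing determinant, which would break the bound. I would circumvent this by choosing $e$ carefully. Contraction never disconnects and only adds vertex weights, so $M_{G/e}$ is automatically charged; for deletion I reduce $W(G)$ by (i) splitting any edge of weight $\le-2$ into parallel $-1$ edges, a copy of which is a non-bridge, (ii) deleting a non-bridge edge when some component still has a cycle, or (iii) in the remaining forest case, either removing a degree-one weight-zero vertex by the move of Figure \ref{moves} (which preserves $M_G$) or, if every leaf of a tree has positive weight, deleting the edge at a leaf, which splits the tree into two charged pieces. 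A short leaf count shows one of these always applies and strictly decreases $W(G)$ within charged graphs; the base case $W(G)=0$ is a connected sum of lens spaces $L(\nu_i,1)$, an $L$-space. The instanton statement follows verbatim upon replacing $\widehat{HF}$ by $I^\#$, the Heegaard Floer triangle by the instanton surgery exact triangle, and $\mathrm{rank}\,\widehat{HF}\ge|H_1|$ by $\dim I^\#\ge|H_1|$; finally the $L$-space link assertion is the special case of large framings, since increasing the surgery coefficients amounts to enlarging the $\nu(v_i)$, which keeps $G$ charged.
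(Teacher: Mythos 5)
Your Heegaard Floer argument is correct and is essentially the paper's own proof: the same induction (your measure $W(G)$ coincides with the edge count once weight $\le -2$ edges are split into parallel $-1$ edges, which is the paper's first reduction), the same positive-definiteness of $\Lambda(G)=L_G+\mathrm{diag}(\nu)$, the same identity $\det\Lambda(G)=\det\Lambda(G-e)+\det\Lambda(G/e)$ (you obtain it from the matrix--determinant lemma applied to the rank-one update $\Lambda(G)=\Lambda(G-e)+(\mathbf e_i-\mathbf e_j)(\mathbf e_i-\mathbf e_j)^{\mathsf T}$; the paper does the identical computation by row/column operations and linearity of the determinant), the same exact triangle coming from the $\{-1,\infty,0\}$ fillings of the crossing loop, and the same treatment of bridges --- the paper likewise removes weight-zero leaves by the last move of Figure \ref{moves} and otherwise chooses $e$ so that both sides of the split stay charged. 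Your rank count is an inline proof of \cite[Proposition 2.1]{MR2168576}, which the paper cites instead. So for the $\widehat{HF}$ statement the proposal matches the paper and is complete.

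The genuine gap is the final claim that the instanton case ``follows verbatim.'' It does not, and the paper spends a paragraph on exactly this point. Scaduto's surgery exact triangle for framed instanton homology, applied here, reads
$$I^\#(M_G)\to I^\#(M_{G-e};\mu)\to I^\#(M_{G/e})\to I^\#(M_G),$$
where $\mu$ is the core of the Dehn filling along the crossing loop $c$: one vertex of the triangle is a priori the \emph{decorated} group $I^\#(\,\cdot\,;\mu)$, which is only known to agree with the undecorated group when $[\mu]=0$ in $H_1(\,\cdot\,;\mathbb{F}_2)$. Until that decoration is removed, your inequality $\dim I^\#(M_G)\le \dim I^\#(M_{G-e})+\dim I^\#(M_{G/e})$ is not justified. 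The paper fixes this topologically: by half-lives--half-dies, exactly one of the three slopes $\{0,-1,\infty\}$ on $c$ is trivial in $H_1(M_G-c;\mathbb{F}_2)$; filling along either of the two homologically nontrivial slopes makes the core isotopic to the homologically trivial slope, so $[\mu]=0$ with $\mathbb{F}_2$-coefficients in that filled manifold, and (choosing the version of the triangle whose decorated term is such a filling) the decoration can be dropped, yielding the undecorated triangle. You need this step, or some substitute for it, before the rank count; the other instanton ingredients you invoke ($\dim I^\#\ge |H_1|$ from \cite[Corollary 1.4]{MR3394316}, and lens spaces being instanton $L$-spaces in the base case) do transfer as you say.
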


\begin{proof}
The proof is similar to the proof that double branched covers of (quasi-)alternating links are $L$-spaces in \cite[Proposition 3.3]{MR2168576}. By replacing edges $e$ with weight $\epsilon(e)<-1$ with parallel copies, we may assume that $\epsilon(e)=-1$ for every edge $e\in E(G)$, and we may assume that there are no loops by removing them without changing the manifold. 

If $G$ is a chainmail graph and $\nu(v)=0$ for all vertices and $\epsilon(e)=-1$ for all edges, then the linking matrix $\Lambda(G)$ is the matrix of the graph Laplacian of the underlying unweighted graph, hence all eigenvalues are $\geq 0$, and the $0$-eigenspace has dimension the number of components of $G$ (essentially the combinatorial Hodge theorem for $0$-cocycles). If $G$ has $\nu(v)\geq 0$, and at least one $\nu(v)>0$ in each component of $G$ and $\epsilon(e)=-1$, then when we add the diagonal matrix with diagonal entries $\nu(v_i)$ to the Laplacian matrix to get $\Lambda(G)$, we get a symmetric positive definite matrix. Thus $det(\Lambda(G))>0$ for such graphs.

The proof is by induction on $|E(G)|$, the number of edges of $G$. The base case is an edgeless graph $G$, in which case all  
vertices have positive weight by hypothesis and $M_G$ is a connected sum of lens spaces. 

Now assume that $E(G)=n$, and all such chainmail graphs with fewer edges represent $L$-spaces. Choose an edge $e$ of $G$, and delete the edge to get $G-e$ and contract the edge to get $G/e$, both of which have fewer edges. If there are parallel edges to $e$ (with the same vertices), then $G/e$ may have loops, but we may delete them in order to satisfy the induction hypothesis while still representing the same manifold (see Figure \ref{moves}). 

We may assume that $e$ is not an isthmus, unless $G$ is a tree (equivalently every edge is an isthmus).  If $G$ has at least two vertices with $\nu(v)>0$, then we may choose $e$ so that both components of $G-e$ satisfy the hypothesis that there is at least one vertex $v$ with $\nu(v)>0$. Otherwise, there is a single vertex $v$ with $\nu(v)>0$. Choose any leaf vertex $v'$ with $\nu(v')=0$ (a tree with non-empty edge set must have at least two leaf vertices, so such a vertex exists). Then we may delete $v'$ keeping the same manifold $M_{G-v'}$ by the last move in Figure \ref{moves}. Thus in the latter case $M_G\cong M_{G-v'}$ will be an $L$-space by induction, and thus we may assume we are in the former case in which case $G-e$ is a chainmail graph satisfying the hypotheses. We will assume that the endpoints of $e$ are labeled $v_1$ and $v_2$.

Claim:  $det \Lambda(G)=det \Lambda(G-e) + det \Lambda (G/e)$. 

Proof: Consider $\Lambda(G)$ and $\Lambda(G-e)$ as linear operators on functions on $V(G)=V(G-e)$. We may write the symmetric matrices for these as 

$$\Lambda(G)= \left(\begin{array}{ccc} w_{11} & w_{12} & x_1^T \\ w_{12} & w_{22} & x_2^T \\x_1 & x_2 &  A\end{array}\right) ,   \Lambda(G-e) =  \left(\begin{array}{ccc} w_{11}-1 & w_{12}+1 & x_1^T \\ w_{12}+1 & w_{22}-1 & x_2^T \\x_1 & x_2 &A  \end{array}\right) , $$
differing only in the upper left block corresponding to vertices $v_1$ and $v_2$. Since we are only interested in the determinant, we may do elementary row operations keeping the determinants the same. Add the first row to the second row and first column to the second column (in either order) of $\Lambda(G), \Lambda(G-e)$ to get matrices with the same determinant 

$$\Lambda(G)'= \left(\begin{array}{ccc} w_{11} & w_{11}+w_{12} & x_1^T \\w_{11}+w_{12} & w_{11}+w_{22}+2w_{12} & x_1^T+ x_2^T \\x_1 & x_1+x_2 & A \end{array}\right) , $$  $$\Lambda(G-e)' =  \left(\begin{array}{ccc} w_{11}-1 & w_{11}+w_{12} & x_1^T \\ w_{11}+w_{12} & w_{11}+w_{22}+2w_{12} & x_1^T+ x_2^T \\x_1 & x_1+x_2 & A \end{array}\right) .$$
Then we see that $$\Lambda(G)'-\Lambda(G-e)' = 
 \left(\begin{array}{ccc} 1 & 0 & 0 \\ 0 & 0 & 0 \\ 0 & 0 & 0 \end{array}\right).$$
 By linearity of determinants (say with respect to the first row), we see that 
 $$det(\Lambda(G))-det(\Lambda(G-e)) = \left|\begin{array}{ccc} 1 & 0 &  0 \\ w_{11}+w_{12} & w_{11}+w_{22}+2w_{12} & x_1^T+ x_2^T \\ x_1 & x_1+x_2 & A \end{array}\right| = det (\Lambda(G/e)),$$
since the lower block matrix is $\Lambda(G/e)$. 

This establishes the determinant identity. One may also prove this using \cite[Theorem 4.4]{arXiv:2110.13949}. 

Hence $|H_1(M_G)| = |H_1(M_{G-e})| + |H_1(M_{G/e})|$. By our induction hypothesis, $M_{G-e}$ and $M_{G/e}$ are $L$-spaces, and they are related by an exact triangle. Then by \cite[Proposition 2.1]{MR2168576}, $M_G$ is also an $L$-space, where the knot is the crossing loop $c$. The proof is very similar to the proof of \cite[Lemma 3.2, Proposition 3.3]{MR2141852}. 

For the framed instanton homology case, the base case is the same (lens spaces are instanton $L$-spaces). We have $dim I^\#(Y) \geq |H_1(Y)|$ by \cite[Corollary 1.4]{MR3394316}. There is an exact triangle (with $\lambda =0$) given in \cite[Section 7.5]{MR3394316}. Applied to the case at hand, we get 
$$I^\#(M_G)\to I^\#(M_{G-e};\mu) \to I^\#(M_{G/e}) \to I^\#(M_G)$$ where $\mu$ is a loop representing the core of the Dehn filling of $M_{G-e}$ on the crossing loop $c$ (Figure \ref{crossing_loop}), and similar exact sequences by permuting the role of $\mu$ for $M_G$ and $M_{G/e}$. But in $M_G-c$, two out of the three slopes $\{0, -1, \infty\}$ will be non-trivial in $H_1(M_G-c;\mathbb{F}_2)$ and the third will be trivial by half-lives, half-dies. Filling along a homologically non-trivial slope, the homologically trivial slope will be isotopic to the core, and hence $[\mu]=0\in H_1(M_{G'};\mathbb{F}_2)$, where $G'=G, G-e,$ or $G/e$ depending on the filling slope. So in this case we get an exact triangle 
$$I^\#(M_G)\to I^\#(M_{G-e}) \to I^\#(M_{G/e}) \to I^\#(M_G).$$
Now we apply the same argument as for Heegaard Floer homology to conclude that $$|H_1(M_G)|\leq dim I^\#(M_G) \leq dim I^\#(M_{G-e}) + dim I^\#(M_{G/e}) $$ $$= |H_1(M_{G-e})|+|H_1(M_{G/e})| =| H_1(M_G)|,$$ and hence $M_G$ is an instanton $L$-space. 
\end{proof}

{\bf Remark:} Zhenkun Li pointed out that one can take a connected sum of any component of a negative alternating chainmail link with an $L$-space knot to get another $L$-space link. The point is that the proof goes through as before, with the vertex label adjusted larger, with the base case of the induction including a split link with one component an $L$-space knot so that surgery still gives an $L$-space for large enough coefficients. 

One may give a slightly different characterization of some of the $L$-spaces described in Theorem \ref{alternating} in terms of Heegaard splittings. Consider a set of generators $\alpha_i, \beta_i,\delta_i$ for the mapping class group $Mod(\Sigma)$ of the Heegaard surface $\Sigma$ by right-hand Dehn twists about curves $a_i,b_i,d_i$ respectively shown in Figure \ref{Heegaard}, which represents a standard Heegaard splitting $H_1\cup_\Sigma H_2$ of $S^3$. 

\begin{center}
\begin{figure}[htb]\includegraphics[width=4in]{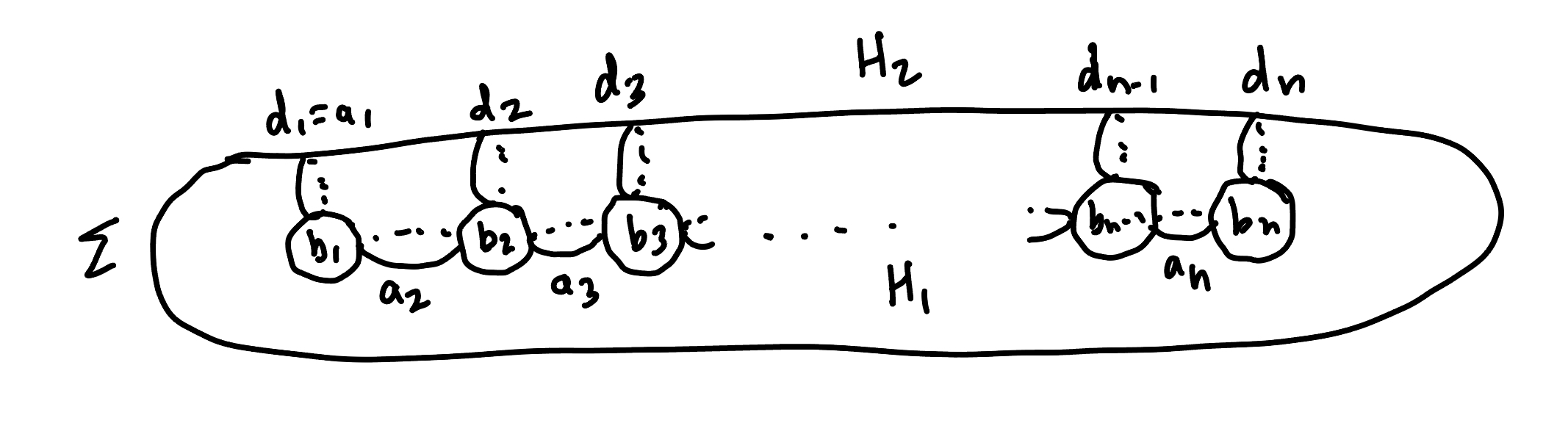}

\caption{Generators for the mapping class group of a Heegaard surface $\Sigma$ for $S^3$} \label{Heegaard}

\end{figure}
\end{center}

\begin{cor} \label{monoid}
Choose $\sigma \in \{\alpha_i,\beta_i^{-1},\delta_i\}^\ast$, the submonoid of products of these elements. Then the resulting manifold $H_1\cup_\sigma H_2$ is a connected sum of an $L$-space and $\#^k S^2\times S^1$ for some $k$.

\end{cor}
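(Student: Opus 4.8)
The plan is to show that every word $\sigma$ in the submonoid $\{\alpha_i,\beta_i^{-1},\delta_i\}^\ast$ produces a Heegaard gluing $H_1\cup_\sigma H_2$ that is itself presentable as $M_G$ for a chainmail graph $G$ satisfying (a weak version of) the hypotheses of Theorem \ref{alternating}, and then invoke that theorem. The key observation, already recorded in the excerpt, is that under the Matveev--Polyak correspondence the generators $\alpha_i,\beta_i,\delta_i$ correspond to the tangle generators of Figure \ref{tangles} whose closures are chainmail-link surgeries; crucially, the right-hand Dehn twists $\alpha_i,\delta_i$ and the \emph{inverse} twists $\beta_i^{-1}$ are exactly the ones realized by \emph{negative}-weight (i.e.\ $\epsilon(e)=-1$) clasps / negatively framed loops in the chainmail picture. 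So the first step is to make this dictionary precise: reading the word $\sigma$ left to right and stacking the corresponding tangles, then taking the plat closure as in Figure \ref{genus2}, yields a chainmail graph $G_\sigma$ all of whose edge weights are negative and all of whose vertex weights are $\geq 0$.

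The second step is to handle the discrepancy between Corollary \ref{monoid}'s conclusion (a connected sum of an $L$-space with some $\#^k S^2\times S^1$) and Theorem \ref{alternating}'s hypothesis (at least one $\nu(v)>0$ in each component). A word $\sigma$ in the submonoid need not force a positive vertex weight in every component of $G_\sigma$; the components of $G_\sigma$ with $\nu(v)=0$ throughout are precisely the ones whose linking submatrix is a pure graph Laplacian, and by the combinatorial Hodge computation in the proof of Theorem \ref{alternating} each such component contributes a rank-one null space to $\Lambda(G_\sigma)$. Each such null direction is exactly an $S^2\times S^1$ summand of $M_{G_\sigma}$. So I would split $G_\sigma$ into its connected components, apply Theorem \ref{alternating} to the components carrying a positive vertex weight to get an $L$-space factor, and identify each balanced (all-$\nu=0$) component with an $S^2\times S^1$ connected summand, giving the stated $k$. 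This is where the structure of the conclusion comes from.

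The main obstacle I anticipate is the first step: verifying that the plat closure of an arbitrary word in $\{\alpha_i,\beta_i^{-1},\delta_i\}$ really does land in the negative-alternating chainmail class, with the correct signs, rather than merely in the class of all chainmail links. One must check that each generator contributes only $\epsilon=-1$ edges (and nonnegative vertex framings) and that \emph{stacking} preserves this sign condition under the Kirby moves used to pass between the tangle closure and the standard chainmail form. Concretely I would track the framing/linking data through the moves of Figure \ref{moves} and the operation $K_3$ of \cite{MR1266062}, confirming that composing generators only adds negative clasps and only increases vertex weights (so $\nu(v)\geq 0$ is maintained), never introducing a positive clasp. Once the sign bookkeeping is pinned down, the rest is a direct appeal to Theorem \ref{alternating} componentwise together with the Hodge-theoretic identification of the $S^2\times S^1$ summands, so the corollary follows.
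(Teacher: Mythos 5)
Your first step---translating a word $\sigma$ in the monoid into a chainmail graph $G_\sigma$ with all edge weights $-1$ and all vertex weights $\geq 0$ via the Matveev--Polyak plat closure---is exactly what the paper does (it asserts this directly, citing \cite[Theorem 7.1]{MR1266062} and pointing to Figure \ref{genus2}), so the obstacle you flag as the main worry is not where the trouble lies. The genuine gap is in your treatment of the balanced components. You claim that each component of $G_\sigma$ with $\nu \equiv 0$ ``is exactly an $S^2\times S^1$ connected summand,'' justified by the rank-one null space of its Laplacian. That inference is invalid: the null space of $\Lambda(G_\sigma)$ only computes $b_1$ of the surgered manifold, and homology cannot identify a homeomorphism type. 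The conclusion is also false. Take $G$ to be a balanced triangle (three vertices with $\nu\equiv 0$, three edges of weight $-1$): the surgery is on a three-component chainmail link with framings $2$ and linking matrix the triangle Laplacian, which has a one-dimensional null space, but the resulting manifold is $L(3,1)\,\#\,S^2\times S^1$, not $S^2\times S^1$. Indeed, by the Ozsvath--Szabo identification quoted in Section 2.2 of the paper \cite[\textsection 3.1]{MR3065184}, a balanced connected chainmail graph $G$ yields $M_G\cong \Sigma_2(K_G)\,\#\,S^2\times S^1$ where $\Sigma_2(K_G)$ is the double branched cover of the alternating link $K_G$; for the triangle, $K_G$ is the trefoil and $\Sigma_2(K_G)=L(3,1)$. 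More generally $|H_1(\Sigma_2(K_G))|$ equals the number of spanning trees of $G$, so every balanced component containing a cycle carries a nontrivial summand that your argument silently discards.

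The repair is precisely that identification, and it is what the paper's proof uses: each balanced component contributes $\Sigma_2(K_G)\,\#\,S^2\times S^1$, and the factor $\Sigma_2(K_G)$ is an $L$-space because it is the double branched cover of a non-split alternating link \cite[Proposition 3.3]{MR2141852}; each component having a positive vertex weight is an $L$-space by Theorem \ref{alternating}, as you say. Since a connected sum of $L$-spaces is an $L$-space (both $\mathrm{rank}\,\widehat{HF}$ and $|H_1|$ are multiplicative under connected sum), the total manifold is an $L$-space connect-summed with $\#^k S^2\times S^1$, where $k$ is the number of balanced components. So your componentwise decomposition and your appeal to Theorem \ref{alternating} for the unbalanced components are correct, but the balanced components must be handled by the branched double cover identification, not by the Hodge-theoretic count of null directions.
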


\begin{proof}
The Kirby diagram produced from the products of these generators by \cite[Theorem 7.1]{MR1266062} is a negative alternating chainmail link corresponding to a chainmail graph with non-negative vertex weights (see Figure \ref{genus2} for an example). In each component of the chainmail graph with vertex weights $0$, there will be an $S^2\times S^1$ summand connected sum with a double-branched cover of an alternating link in the corresponding component of the Kirby diagram.  Hence the manifold will be of the claimed type in each connect summand, and therefore in total.
\end{proof}

\section{Augmented negative alternating chainmail links are generalized $L$-space links}

Next we consider augmented chainmail links. 

\begin{definition} \cite[Definition 2.9]{MR3692910}
Let $L = \cup_{i=1}^l L_i \subset S^3$ be a link, and for each $i$ choose a sign $\epsilon_i\in \{\pm \}$. 
Then $L$ is called a {\it generalized (instanton) $(\epsilon_1\cdots\epsilon_l)$ $L$-space link} if there exist integers $p_1,\ldots,p_l$ such that $S^3_{\epsilon_1n_1,\ldots, \epsilon_l n_l}(L)$ is a (instanton) $L$-space for all $n_1,\ldots,n_l$ with $n_i>p_i$ for all $1\leq i\leq l$. If we do not specify the sign, then $L$ is denoted a generalized $L$-space link. 
\end{definition}

\begin{theorem} \label{augmented} Partially augmented negative alternating chainmail links are generalized $L$-space links with a $+$ sign associated to each vertex loop and $-$ to each edge (crossing) loop.

\end{theorem}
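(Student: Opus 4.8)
The plan is to induct on the number of crossing loops, taking Theorem \ref{alternating} as the base case. I arrange the surgeries so that every vertex loop $L_i$ carries a large positive coefficient and every crossing loop a large negative one; since all vertex weights are then positive, the hypothesis ``at least one $\nu(v)>0$ in each component'' of Theorem \ref{alternating} holds automatically, and the isthmus/tree analysis needed there becomes unnecessary. When there are no crossing loops the link is a genuine negative alternating chainmail link and the claim is exactly Theorem \ref{alternating}. For the inductive step I single out one crossing loop $c$ encircling $L_1,L_2$, and recall from the discussion preceding Theorem \ref{alternating} that filling $c$ along $\infty$ deletes the edge (giving $M_{G-e}$), along $0$ contracts it (giving $M_{G/e}$), and along $-1$ replaces it by a genuine weight $-1$ edge (giving $M_G$). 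Each of these has one fewer crossing loop, so each is an $L$-space by the outer induction once the remaining coefficients are taken large.

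The difficulty is that the target coefficient on $c$ is a large negative integer $-m$, which for $m\ge 2$ is \emph{not} a chainmail filling (it inserts a fractional twist rather than $m$ clasps) and so cannot be reached in one step. I would therefore run a secondary induction building the slope up from $-1$ to $-m$ through the triads $\{\infty,-k,-(k+1)\}$ on $c$: these three slopes are pairwise distance one in the Farey graph, hence support a surgery exact triangle in $\widehat{HF}$ by \cite[Theorem 9.16]{MR2113020}. Fixing the other coefficients and writing $Y_s$ for the result of $s$-surgery on $c$, at the $k$-th stage the filling $Y_\infty=M_{G-e}$ and the filling $Y_{-k}$ (an $L$-space by the secondary hypothesis, the base $k=1$ being the weight $-1$ chainmail manifold) are already known, so only $Y_{-(k+1)}$ must be identified as an $L$-space.

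The triangle collapses once the homology orders add correctly. Writing $\widetilde\Lambda|_s$ for the surgery linking matrix of the augmented link with coefficient $s$ on $c$, cofactor expansion along the $c$-row gives $\det\widetilde\Lambda|_s = sC+D$, where $C$ is the cofactor $\det(\widetilde\Lambda \text{ with } c \text{ deleted})$, so $|C|=|H_1(Y_\infty)|=|H_1(M_{G-e})|$, and $D=\det\widetilde\Lambda|_0$, so $|D|=|H_1(M_{G/e})|$. The additivity $|H_1(M_G)|=|H_1(M_{G-e})|+|H_1(M_{G/e})|$ from Theorem \ref{alternating} reads $|{-C+D}|=|C|+|D|$, which forces $C$ and $D$ to have opposite signs; consequently $s\mapsto|\det\widetilde\Lambda|_s|$ is affine with constant increment $|C|$, so $|H_1(Y_{-(k+1)})|=|H_1(Y_{-k})|+|H_1(M_{G-e})|$ for every $k\ge 0$. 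Feeding this into \cite[Proposition 2.1]{MR2168576} together with the exact triangle and the two known $L$-spaces yields that $Y_{-(k+1)}$ is an $L$-space, completing the secondary induction up to $-m$. The instanton statement follows the same scheme, using the exact triangle of \cite[Section 7.5]{MR3394316} and the half-lives, half-dies argument of Theorem \ref{alternating} to check that for each new triad $\{\infty,-k,-(k+1)\}$ the relevant core curve is $\mathbb{F}_2$-null-homologous after filling, so the triangle is untwisted.

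I expect the main obstacle to be precisely the sign bookkeeping in this additivity: the exact triangle forces an $L$-space only when the three homology orders genuinely add, so the signed determinant $sC+D$ must keep a constant sign as $s$ runs through the negative integers, rather than oscillating. Pinning this down reduces to the opposite-sign property of $C$ and $D$, which I obtain from the additivity already established in Theorem \ref{alternating} (equivalently \cite[Theorem 4.4]{arXiv:2110.13949}). The remaining points are routine: choosing uniform thresholds $p_i$ dominating the finitely many inner thresholds when several crossing loops are filled at once, and keeping every intermediate manifold a rational homology sphere, which holds because $sC+D$ never vanishes for $s\le 0$ once $C$ and $D$ have opposite signs.
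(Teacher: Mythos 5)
Your overall architecture matches the paper's: surgery exact triangles on the triads $\{\infty,-k,-(k+1)\}$ for a crossing loop, combined with additivity of $|H_1|$ and \cite[Proposition 2.1]{MR2168576}, with the instanton case handled as in Theorem \ref{alternating}. But there is a genuine gap at exactly the point you flag as the ``main obstacle'': the opposite-sign property of $C$ and $D$. You derive it from the additivity $|H_1(M_G)|=|H_1(M_{G-e})|+|H_1(M_{G/e})|$ of Theorem \ref{alternating}, but that identity concerns the pure chainmail linking matrices (graph Laplacian plus a nonnegative diagonal, hence positive definite under the hypotheses); it says nothing once the other crossing loops sit inside the matrix with large negative framings. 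Concretely, in your inductive step the ``fixed other coefficients'' include crossing loops $c_1,\dots,c_{r-1}$ at coefficients $-m_i\le -2$, so $Y_\infty$, $Y_0$, $Y_{-1}$ are \emph{not} $M_{G-e}$, $M_{G/e}$, $M_G$ (your notation conflates them), and the matrices whose determinants are $C$, $D$, $-C+D$ are indefinite. Theorem \ref{alternating} gives $|{-C+D}|=|C|+|D|$ only when $r=1$, or when all the other crossing loops happen to sit at $-1$; so your secondary induction closes for the \emph{first} crossing loop you process, but the additivity needed to process the second one is no longer available. Nothing in your outer induction supplies it either, since that induction only carries the $L$-space property, not determinant signs.

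The fix is to strengthen the induction to carry a sign statement, which is exactly what the paper does: it proves a lemma that the determinant of the surgery matrix of a partially augmented configuration has sign $(-1)^{|V_c(G)|}$, where $V_c(G)$ is the set of crossing-loop vertices, by induction on the total crossing-loop weight. Column operations identify the weight $-1$ case with a configuration having one fewer crossing loop, and row linearity gives $\det(-c-1)=\det(-c)-\det(G-v)$, where the two terms on the right have opposite signs by induction, so absolute values add at every step. With that lemma in hand, your affine expansion $\det\widetilde\Lambda|_s=sC+D$ does keep a constant sign for all $s\le -1$ no matter how many other crossing loops are present, and the rest of your argument goes through verbatim. (Alternatively, you could prove the needed additivity by expanding the determinant multi-affinely in \emph{all} crossing-loop coefficients and repeatedly applying the deletion-contraction identity of Theorem \ref{alternating} to the resulting coefficients, but that amounts to reproving the paper's sign lemma in a messier form.)
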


As a corollary of Theorems \ref{complete} and \ref{augmented}, we answer a question of Yajing Liu \cite[Question 1.9]{MR3692910}. 
\begin{cor} Any closed orientable connected 3-manifold is obtained by surgery on a generalized $L$-space link.
\end{cor}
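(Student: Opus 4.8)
The plan is to chain the two cited results together through the augmentation construction. Theorem~\ref{complete} hands us, for an arbitrary closed oriented 3-manifold $Y$, a chainmail graph $G$ with $Y\cong M_G$, i.e.\ a surgery presentation on the chainmail link $L_G$; Theorem~\ref{augmented}, applied with every edge augmented (the fully augmented case advertised in the introduction), supplies links that are guaranteed to be generalized $L$-space links. What links the two is the augmentation move discussed above, which re-expresses $M_G$ as a surgery on a flat fully augmented chainmail link, converting the integer edge weights $\epsilon(e)$ into reciprocal surgery coefficients on crossing circles while leaving the underlying link dependent only on the simplicial planar graph.

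Concretely, I would first invoke Theorem~\ref{complete} to write $Y\cong M_G$, and then use the moves of Figure~\ref{moves} to reduce to the case that $G$ is simplicial with nonzero edge weights: amalgamate parallel edges into single edges (adding weights), erase the resulting zero-weight edges, and erase loops, none of which alters $M_G$. Next I would fully augment, inserting a crossing circle at every edge as in Figure~\ref{augmenting}, producing a flat fully augmented chainmail link $L$. The key observation is that $L$ depends only on the underlying simplicial planar graph, since the vertex components are planar unknots and the edge clasps are flattened; the weight data $(\nu,\epsilon)$ survives entirely in the surgery coefficients --- framings on the vertex loops together with the reciprocals $1/\epsilon(e)$ on the crossing circles. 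This is precisely the identification recorded after Theorem~\ref{complete}, justified by the Kirby move $K_3$ of \cite{MR1266062} as in Figure~\ref{crossing_loop}, so that $Y$ is exhibited as one particular surgery on $L$.

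It then remains to recognize $L$ as an augmented negative alternating chainmail link: it is the full augmentation of the negative alternating chainmail link carried by the same simplicial graph with every edge weight set equal to $-1$. Theorem~\ref{augmented} therefore applies and certifies that $L$ is a generalized $L$-space link, and since $Y$ is a surgery on $L$ the corollary follows. I expect the only genuine points requiring care to be bookkeeping rather than conceptual. The first is checking that arbitrary integer edge weights of either sign are faithfully realized by the reciprocal surgeries on the crossing circles, so that every $M_G$ really does appear among the surgeries on its fully augmented link. The second, and the subtlety most likely to trip up a hasty reading, is that the particular surgery coefficients producing $Y$ need not satisfy the inequalities $n_i>p_i$ from the definition of a generalized $L$-space link; this is harmless, however, because the corollary demands only that $L$ itself be a generalized $L$-space link, not that $Y$'s own surgery lie in the $L$-space range.
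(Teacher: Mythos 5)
Your proposal is correct and follows essentially the same route as the paper, which derives the corollary by chaining Theorem~\ref{complete} (every closed oriented 3-manifold is $M_G$ for some chainmail graph $G$) with Theorem~\ref{augmented}, via the observation that any chainmail surgery is recovered from the flat fully augmented link by reciprocal surgeries on the crossing circles. Your explicit attention to the two bookkeeping points---reducing $G$ to a simplicial graph and noting that the surgery realizing $Y$ need not lie in the $L$-space range---is exactly the right reading of what the paper leaves implicit.
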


\begin{proof} \ref{augmented}
First, we need a lemma.

\begin{lemma}
Consider a partially augmented chainmail link simplicial graph $G$, with negative weights on the vertices $V_c(G) \subset V(G)$ corresponding to crossing loops and non-negative weights on the other vertices $V(G)-V_c(G)$. Moreover, the edges between vertices of $V(G)-V_c(G)$ have weight $-1$. Then the sign of the determinant is $(-1)^{|V_c(G)|}$ if non-zero. 

\end{lemma}
\begin{proof}
The proof is by induction on the sum of the negative crossing loop weights. If there are no crossing loops, then $\Lambda(G)$ is is the linking matrix of a negative alternating chainmail graph, and hence the sign of the determinant is non-negative by the proof of Theorem \ref{alternating}, establishing the base case. 

Now consider a single crossing loop of the flat augmented link with weight $- c$, $c > 0$, and its two adjacent loops having framings $a'$ and  $b'$. The matrix $\Lambda(G)$ looks like 

$$\left(\begin{array}{cccc}-c & 1 & -1 & 0 \\1 & a' & 0 & \ast \\-1 & 0 & b' & \ast \\0 & \ast & \ast & A\end{array}\right).$$

When $c=1$, we do column operations adding the first column to the second and subtracting the first column from the third to get 

 $$\left|\begin{array}{cccc}-1 & 0 & 0 & 0 \\1 & a'+1 & -1 & \ast \\-1 & -1 & b'+1 & \ast \\0 & \ast & \ast & A\end{array}\right| = (-1)\left|\begin{array}{ccc}a'+1 & -1 & \ast \\ -1 & b'+1 & \ast \\ \ast & \ast & A\end{array}\right|  .$$

The last determinant is that of a partially augmented chainmail link by doing a $-1$ twist along the crossing loop (see the left diagrams of Figure \ref{crossing_loop}). So the sign of this determinant is $(-1)^{(|V_c(G)|-1)}$ by induction. Thus it is true by induction for $c=1$. 

When $c>1$, we have 

$$\left|\begin{array}{cccc}-c-1 & 1 & -1 & 0 \\1 & a' & 0 & \ast \\-1 & 0 & b' & \ast \\0 & \ast & \ast & A\end{array}\right| =\left|\begin{array}{cccc}-c & 1 & -1 & 0 \\1 & a' & 0 & \ast \\-1 & 0 & b' & \ast \\0 & \ast & \ast & A\end{array}\right| - \left|\begin{array}{cccc}1&0 & 0 & 0 \\1 & a' & 0 & \ast \\-1 & 0 & b' & \ast \\0 & \ast & \ast & A\end{array}\right| .$$
The matrix on the right has determinant 

$$\left|\begin{array}{ccc}  a' & 0 & \ast \\ 0 & b' & \ast \\ \ast & \ast & A\end{array}\right|$$
which is $det \Lambda(G-v)$. By induction, this matrix has the opposite sign, so we get by induction on $c$ that the sign with weight $-c-1$ is $(-1)^{|V_c(G)|}$. 
\end{proof}

Now we apply the exact triangle to see if $M_G$ with $\nu(v)=-c$ is an $L$-space, then $M_G$ with $\nu(v)=-c-1$ is an $L$-space. These are in an exact triangle with the loop erased corresponding to $M_{G-v}$, so we get this by induction again applying \cite[Proposition 2.1]{MR2168576}. The framed instanton case is similar to the argument at the end of the proof of Theorem \ref{alternating}.
\end{proof}

\section{Non-orderable positive fillings on negative alternating chainmail links}

We are not able to show that the $L$-spaces described in Theorem \ref{augmented} have non-orderable fundamental group, addressing \cite[Conjecture 1]{MR3072799}, but we can show non-orderability in special cases. 

\begin{theorem}
For a chainmail graph $G$ with edge weights $-1$, and non-negative vertex weights with at most one vertex weight non-zero, $\pi_1(M_G)$ is non-orderable. 
\end{theorem}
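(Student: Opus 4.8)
The plan is to follow Greene's non-left-orderability argument \cite{MR3778170} for branched double covers of alternating links, replacing his Goeritz-type presentation with the analogous presentation coming from the chainmail surgery. First I would make the standard reductions: using the moves of Figure \ref{moves} I may assume $G$ is connected and simplicial with every edge weight equal to $-1$. If every vertex weight were $0$ then $\det\Lambda(G)=0$ (the Laplacian kills constants) and $\pi_1(M_G)$ acquires a $\mathbb{Z}$ summand, so the substantive case --- and the one in which $M_G$ is a rational homology sphere --- is exactly one vertex $v_0$ with $\nu(v_0)=m>0$ and $\nu(v)=0$ otherwise; this is the case I would treat. Here the framing matrix is $\Lambda(G)=\mathrm{Lap}(G)+m\,E_{v_0v_0}$, which is positive definite by the computation in the proof of Theorem \ref{alternating}, so $H_1(M_G)$ is finite; if $\pi_1(M_G)$ is trivial or finite there is nothing to prove, so assume it is infinite.

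Next I would write down the presentation. From the Ozsvath--Szabo Kirby diagram for these surgeries \cite{MR3065184} (the same diagram Greene uses), $\pi_1(M_G)$ is generated by meridians $x_v$, one per vertex $v$, with one relator $r_v$ per vertex obtained from the framed longitude of $L_v$. Because each $L_v$ is a planar unknot and the only linking is through the weight $-1$ clasps, reading around the disk bounded by $L_v$ in cyclic planar order yields a Wirtinger-type relator
\[ r_v \;=\; x_v^{\,\nu(v)+\deg(v)}\ \prod_{w\sim v} g_{v,w}\,x_w^{-1}\,g_{v,w}^{-1}, \]
where the conjugators $g_{v,w}$ are prescribed words in the generators determined by the planar embedding. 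The essential structural point, exactly as in Greene's setup, is that the self-exponent $\nu(v)+\deg(v)$ is strictly positive at every vertex and is strictly larger than $\deg(v)$ precisely at $v_0$, while the neighbouring generators enter only through conjugates of their inverses.

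I would then assume for contradiction that $\pi_1(M_G)$ carries a left-invariant order $\prec$ with positive cone $P$ and run Greene's propagation argument on this presentation. The idea is to use the relations $r_v=1$ to compare each generator with (conjugates of) its neighbours along the edges of $G$, propagating a strict inequality around the connected graph, the excess factor $x_{v_0}^{m}$ at the distinguished vertex supplying the strict gap that closes the cycle and forces $1\prec 1$. Concretely I would record the sign of each generator relative to $P$, identify the $\prec$-extremal behaviour among the $x_v$, and feed it into the relator at the corresponding vertex, mirroring \cite[Theorem 2.1]{MR3778170} and \cite[Theorem 4]{MR3072799}.

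\textbf{Main obstacle.} The crux --- and the only place where genuine care beyond bookkeeping is needed --- is that a left-order is not conjugation invariant, so one cannot simply read off the sign of a conjugated term $g_{v,w}\,x_w^{-1}\,g_{v,w}^{-1}$ from the sign of $x_w$. This is precisely the difficulty Greene's argument is designed to handle: one must track the explicit planar conjugators $g_{v,w}$ and exploit left-invariance (rather than bi-invariance) to control the comparison, and then check that the asymmetry introduced by the single positive weight $m$ at $v_0$ still forces the contradiction in the chainmail setting rather than the strictly alternating-link setting. A secondary point to verify is that the sign-propagation genuinely reaches every vertex, which is where connectivity of $G$ is used, and that generators equal to the identity do not interrupt the chain of comparisons.
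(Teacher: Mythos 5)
Your overall strategy (adapt Greene's argument to the surgery presentation of $\pi_1(M_G)$) is the same as the paper's, and your presentation with meridian generators $x_v$ and relators $x_v^{\nu(v)+\deg(v)}\prod_{w\sim v} g_{v,w}x_w^{-1}g_{v,w}^{-1}$ is essentially correct. But the proposal stops exactly where the proof has to start: the ``propagation argument'' is never carried out, and the obstacle you flag --- that a left order is not conjugation invariant, so the signs of the conjugated terms $g_{v,w}x_w^{-1}g_{v,w}^{-1}$ cannot be read off from the signs of the $x_w$ --- is precisely what a correct proof must dispose of, not merely note. The missing idea, which is how both Greene \cite{MR3778170} and the paper proceed, is to abandon the meridians as the objects being compared and instead work with \emph{edge elements}: for each oriented edge of $G$ one takes an explicit loop in $S^3-L_G$ (Figure \ref{acyclic}(b)), chosen so that (i) the Dehn-filling relator at a vertex of weight $0$ is literally the product of the incident edge elements, with no conjugators (Figure \ref{zero_framing}), and (ii) the product of edge elements around any cycle of $G$ is trivial. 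With these two conjugator-free relations in hand the argument is short: orienting each edge so that its element is $\succ 1$ must give an acyclic orientation (a directed cycle would exhibit a product of positive elements equal to $1$); an acyclic orientation of a finite graph has both a sink and a source; and at a sink or source of weight $0$ the vertex relation exhibits a nontrivial product of same-sign elements equal to $1$, a contradiction. Without the passage to edge elements (or some substitute for it), your plan contains no mechanism that actually produces the contradiction, so the core of the proof is absent.

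A secondary but substantive error: your intended use of the distinguished vertex $v_0$ is backwards. You propose that the excess factor $x_{v_0}^{m}$ ``supplies the strict gap that closes the cycle,'' i.e.\ that the contradiction is extracted at $v_0$. In fact $v_0$ is the one vertex where no contradiction can be extracted: its relator, $x_{v_0}^{m}\cdot(\text{product of incoming edge elements})=1$, is perfectly consistent with a left order, since it merely forces $x_{v_0}^{m}\prec 1$. The hypothesis ``at most one vertex weight nonzero'' is used for the opposite purpose: because an acyclic orientation yields both a sink and a source (distinct vertices once $G$ is connected and has an edge), at least one of them has weight $0$, and that zero-weight vertex is where the contradiction lives. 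You would also eventually need the degenerate cases the paper treats at the end (some edge elements trivial, meridians torsion or trivial), but those are minor compared with the missing main step.
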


\begin{proof}
The proof is modeled on the proof of Greene that double-branched covers of alternating links have non-orderable fundamental group \cite[Theorem 2.1]{MR3778170} (proved originally by Boyer, Gordon and Watson \cite[Theorem 4]{MR3072799}). 

\begin{center}
\begin{figure}[htb]\includegraphics[width=4in]{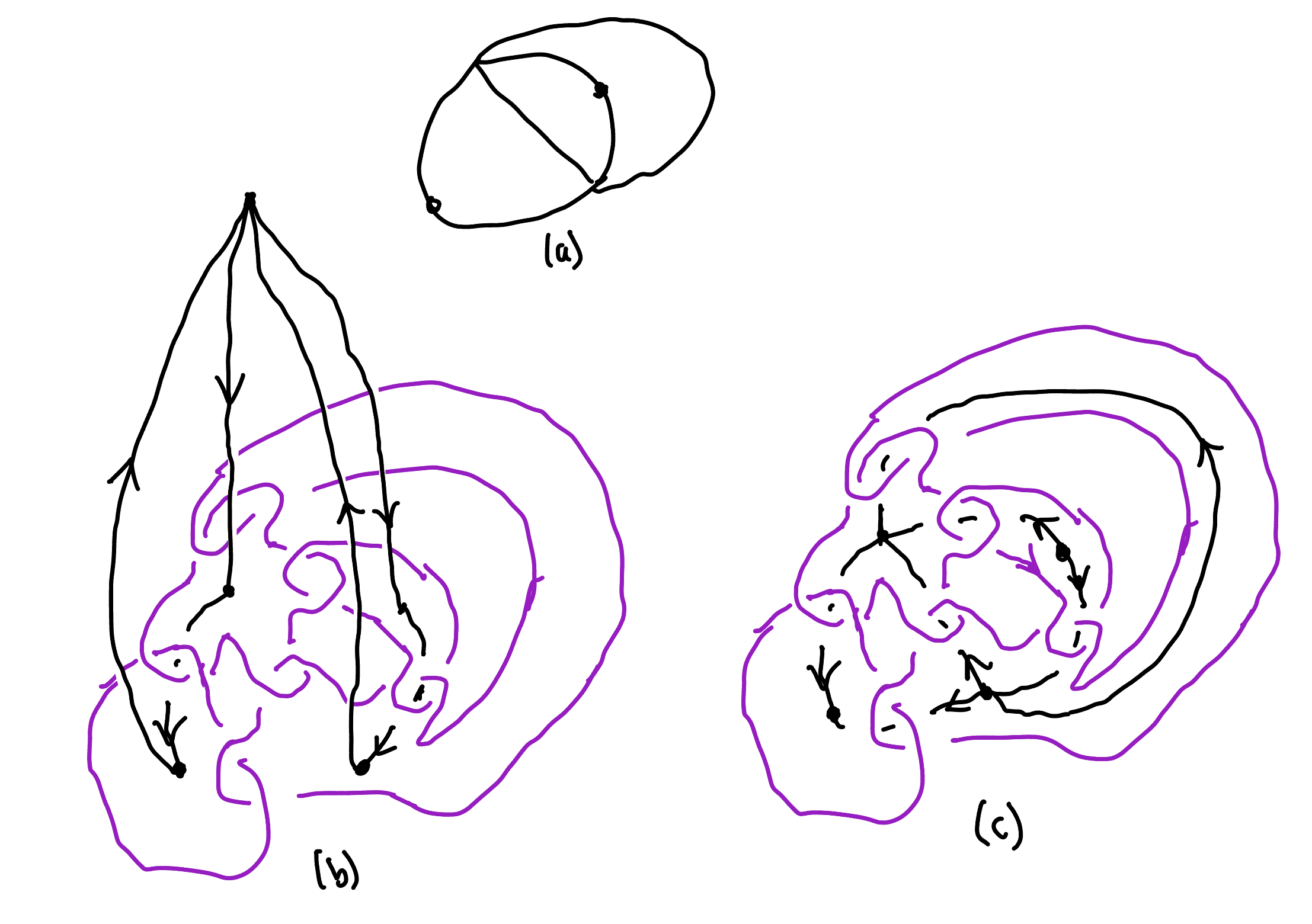}

\caption{(a) Planar graph (b) loops associated to oriented edges (c) acyclic orientation from group ordering} \label{acyclic}

\end{figure}
\end{center}

Each edge of $G$ with a chosen orientation corresponds to an element of $\pi_1(S^3-L_G)$ as shown in Figure \ref{acyclic}(b). If a vertex $v_i$ has weight $\nu(v_i)=0$, then the relation associated to Dehn filling will be the product of the incoming edges (see Figure \ref{zero_framing}). 

\begin{center}
\begin{figure}[htb]\includegraphics[width=4in]{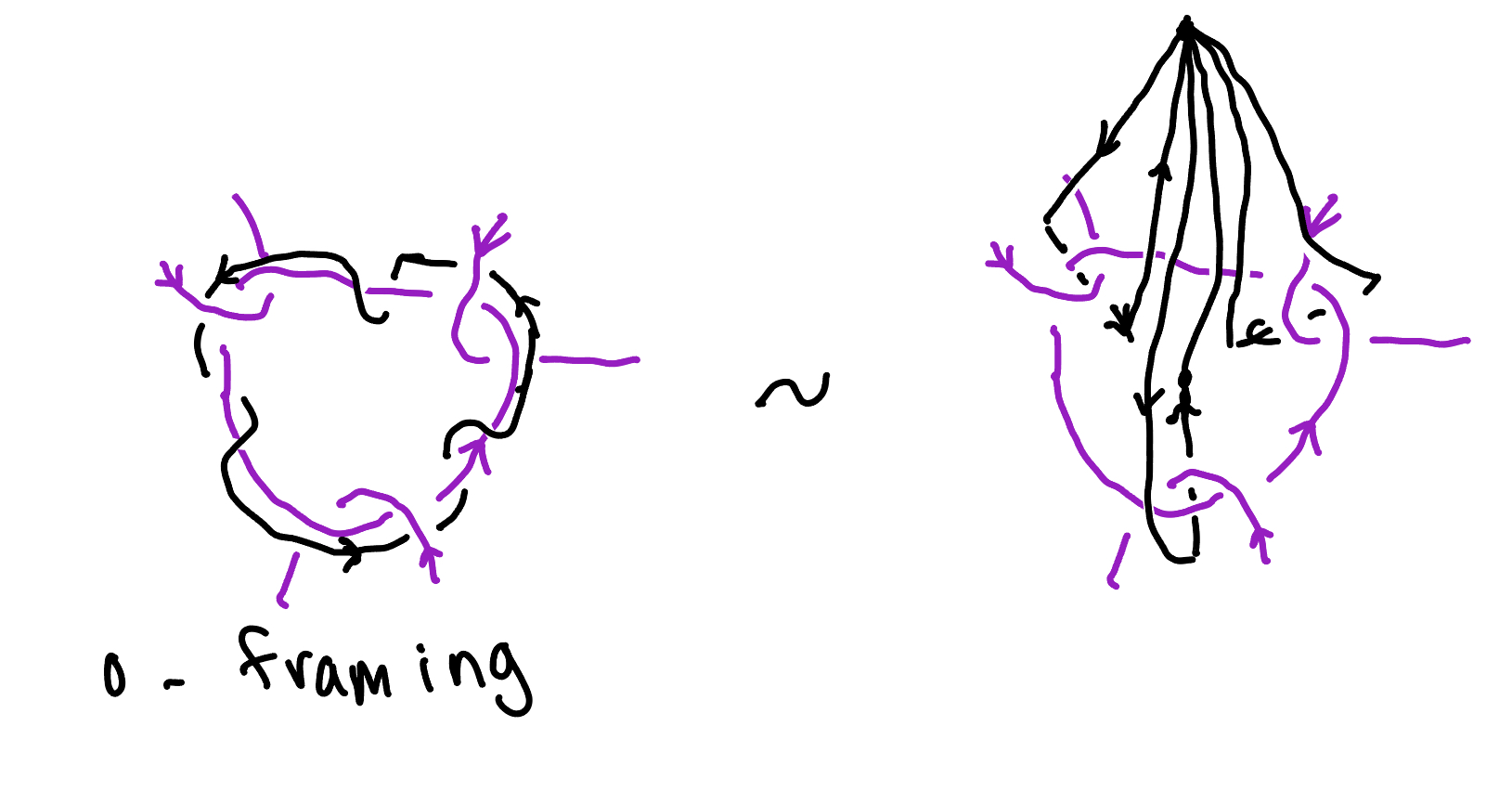}
\caption{The zero-framing loop is a product of incoming edge loops} \label{zero_framing}
\end{figure}
\end{center}

Suppose that $\pi_1(M_G)$ admits a left ordering. Then we may orient each edge so that the associated element is $>1$ (assuming all of these elements are non-trivial first). This gives an acyclic orientation of the graph \ref{acyclic}(c); any oriented loop corresponds to the trivial element but is also a product of elements $>1$, a contradiciton. Then there must be a sink vertex and a source vertex. Since only one vertex $v$ has weight $\nu(v)>0$ by assumption, we may choose a sink or source vertex $v'$ with $\nu(v')=0$. In this case, the product of the edges oriented into this will be trivial but also $>1$ ( or $<1$), a contradiction. 

In the case that some of the edges are trivial, we may still locate a sink or source vertex, allowing that some of the adjacent edges are trivial. If all edges are trivial, then either all vertex weights are zero, and we are in the case considered by Greene \cite{MR3778170}, or one vertex has non-zero weight. In this case the relator from Dehn filling will give a power of the meridian of that loop is trivial. If the meridian is torsion, then the group cannot be orderable. If the meridian is trivial, the one can see that all meridians are trivial (since the edge loops are products of meridians), and hence the fundamental group is trivial. But by definition the trivial group is not left-orderable. 
\end{proof}

A theorem of Tao Li implies that a 3-manifold $Y$ with Heegaard genus 2 that is an $L$-space does not have orderable fundamental group \cite[Theorem 1.2]{li2022taut}. One may apply this to conclude that the manifolds in Corollary \ref{monoid} whose gluing map lies in the monoid $\{\alpha_1, \alpha_2, \beta_1^{-1},\beta_2^{-1},\delta_2\}^*$ do not have orderable fundamental group unless it is a connected sum of copies of $S^1\times S^2$. Such manifolds are realized by surgery on the framed links made by composing the tangles in Figure \ref{tangles}, such as in Figure \ref{genus2}. 

\section{Asymmetric $L$-space links}
It was shown that there exist asymmetric $L$-spaces and asymmetric $L$-space knots \cite[Theorem 1.2]{MR3507256}, \cite{MR4194294}. Here we show the existence of asymmetric $L$-space links. 

\begin{theorem} \label{asymmetric}
There are alternating hyperbolic chainmail links of arbitrarily many components which have trivial symmetry group.
\end{theorem}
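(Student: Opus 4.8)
The plan is to realize asymmetric alternating hyperbolic chainmail links as flat fully augmented links and to transport the rigidity of their symmetry groups (Theorem on flat fully augmented link symmetries, deduced from \cite{millichap2023flat}) into statements about the chainmail links themselves. First I would recall that every flat fully augmented chainmail link is obtained from a planar graph $G$, with crossing loops inserted along each edge; by the cited symmetry theorem, any symmetry of the complement that preserves the set of crossing loops is induced by a symmetry of the link in $S^3$, and such symmetries are in turn governed by the combinatorial symmetries of the underlying graph $G$ together with its planar embedding. Hence the central reduction is: if I can produce planar graphs $G_n$ (with $n \to \infty$ vertices, or $n$ controlling the number of link components) that admit no nontrivial graph/embedding symmetry, the associated flat augmented chainmail links will have trivial symmetry group.

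The key steps, in order, are as follows. Step one: verify that for a flat fully augmented link every symmetry of the complement must send crossing loops to crossing loops, so that the hypothesis of the symmetry theorem is automatically satisfied; the standard way to see this is that crossing loops are distinguished topologically (e.g.\ they bound twice-punctured disks, or are the only components with a particular geometric/combinatorial signature in the canonical hyperbolic structure), so no symmetry can interchange a crossing loop with a vertex loop. Step two: reduce the symmetry group of the link to the automorphism group of the planar graph $G$ respecting its embedding in $S^2$, using the dictionary between graph data and the augmented link from Figure \ref{graph_to_augmented}. Step three: exhibit an explicit infinite family of asymmetric rigid planar graphs; a clean choice is to take an asymmetric planar graph and, if needed, to attach pendant gadgets of distinct sizes to break all automorphisms, guaranteeing the automorphism group of the embedded graph is trivial for each $n$. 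Step four: confirm hyperbolicity of the resulting flat augmented chainmail link complements, which for flat fully augmented links follows from the known structure of their complements (they decompose into ideal regular polyhedra / are known to be hyperbolic once the underlying graph is sufficiently complex and not a cycle or theta-type exception), and confirm that the diagrams are alternating, which holds because augmenting along every edge with the flat convention yields an alternating diagram. Finally, assemble these to conclude that each $L_{G_n}$ is an alternating hyperbolic chainmail link with trivial symmetry group, and that the number of components grows without bound.

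The main obstacle I expect is Step one together with the precise passage from ``symmetry of the complement'' to ``automorphism of the embedded planar graph.'' The cited symmetry theorem gives that crossing-loop-preserving symmetries are geometric, but I still must argue that every isometry of the (hyperbolic) complement actually preserves the crossing loops, and then that the induced action on $S^3$ is determined combinatorially by $G$ and refines to an automorphism of the planar graph rather than merely a homeomorphism of the link. Care is needed because orientation-reversing symmetries and the mirror image of the diagram could a priori supply extra symmetries; I would handle these by choosing the rigid graphs $G_n$ to be chiral and asymmetric as \emph{planar} graphs, so that neither orientation-preserving nor orientation-reversing symmetries survive. A secondary technical point is ensuring hyperbolicity uniformly across the family, which I would dispatch by invoking the general hyperbolicity of flat fully augmented link complements whose graphs are not among the small non-hyperbolic exceptions, and by noting that adding pendant gadgets keeps us away from those exceptions.
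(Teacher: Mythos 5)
Your approach contains a fatal flaw at its core: a flat fully augmented link can \emph{never} have trivial symmetry group. Every such link admits a nontrivial orientation-reversing involution, namely reflection through the projection plane (the ``reflection surface''): the vertex loops lie in that plane and each crossing circle meets it perpendicularly, so the reflection preserves the link component by component. This symmetry exists regardless of how asymmetric the underlying planar graph is, and it acts \emph{trivially} on the graph data --- so your proposed remedy of choosing ``chiral and asymmetric'' planar graphs $G_n$ cannot eliminate it. A second problem is that your final objects are not of the right type: the theorem concerns \emph{alternating} chainmail links, and a flat fully augmented link is not one. In its standard diagram each crossing circle passes over both enclosed strands on one side and under both on the other (over-over-under-under), so the diagram is not alternating; your Step four claim that flat augmentation ``yields an alternating diagram'' is false. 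The alternating structure in this setting comes from twisting, i.e.\ from filling the crossing circles, which your construction never does.

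The paper's proof resolves both issues with exactly the move your proposal omits: it Dehn fills each crossing circle $c_i$ of the augmented link along the slope $-1/n_i$ with the $n_i$ large and pairwise distinct, producing an honest negative alternating chainmail link. These coefficients do two jobs. First, by Thurston's hyperbolic Dehn surgery theorem the cores of the fillings are the shortest geodesics in the filled manifold and have distinct lengths, so any isometry preserves each Margulis tube individually and hence restricts to a symmetry of the augmented complement fixing each crossing cusp; combined with the symmetry theorem from \cite{millichap2023flat} and hypotheses on $G$ (simplicial, no triangles, no degree-$2$ vertices, no separating edge --- needed so that crossing disks are the only thrice-punctured spheres and the reflection surface is unique), this forces every orientation-preserving symmetry to induce a graph automorphism fixing each edge, hence to be trivial. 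Note that no asymmetric graph is required: the asymmetry is manufactured by the distinct filling coefficients, not by the combinatorics of $G$. Second, the reflection does not preserve the slopes $-1/n_i$, so it does not descend to a symmetry of the filled (alternating) link. Finally, your Step one, asserting that crossing loops are ``distinguished topologically,'' is precisely where the paper's graph-theoretic hypotheses and \cite[Proposition 2.7]{millichap2023flat} are needed; without them the augmented complement can contain other thrice-punctured spheres and the reduction to graph automorphisms fails.
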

\begin{proof}
Start with a flat augmented chainmail link associated to a graph $G$ such as in Figure \ref{graph_to_augmented} with hyperbolic complement (so no parallel crossing loops). Perform $-1/n_i$ surgeries with $n_i$ large on each of the crossing loops $c_i$ to make a negative alternating chainmail link. By choosing the $n_i$ sufficiently large, we may assume that the cores of the Dehn fillings on each of the crossing loops $c_i$ give the $|E(G)|$ shortest loops in the hyperbolic metric on the manifold, and they have distinct lengths \cite[Theorem 5.8.2]{Th}. Thus, any isometry of this link complement must preserve the Margulis tubes about these short loops. Thus it also preserves the complements of these loops which is the initial flat augmented chainmail link complement. By \cite{millichap2023flat}, any homeomorphism of this link complement must be induced by a homeomorphism of the link. Moreover, cusps are taken to cusps, and hence cusps corresponding to crossing loops are fixed. 

\begin{center}
\begin{figure}[htb]\includegraphics[width=4in]{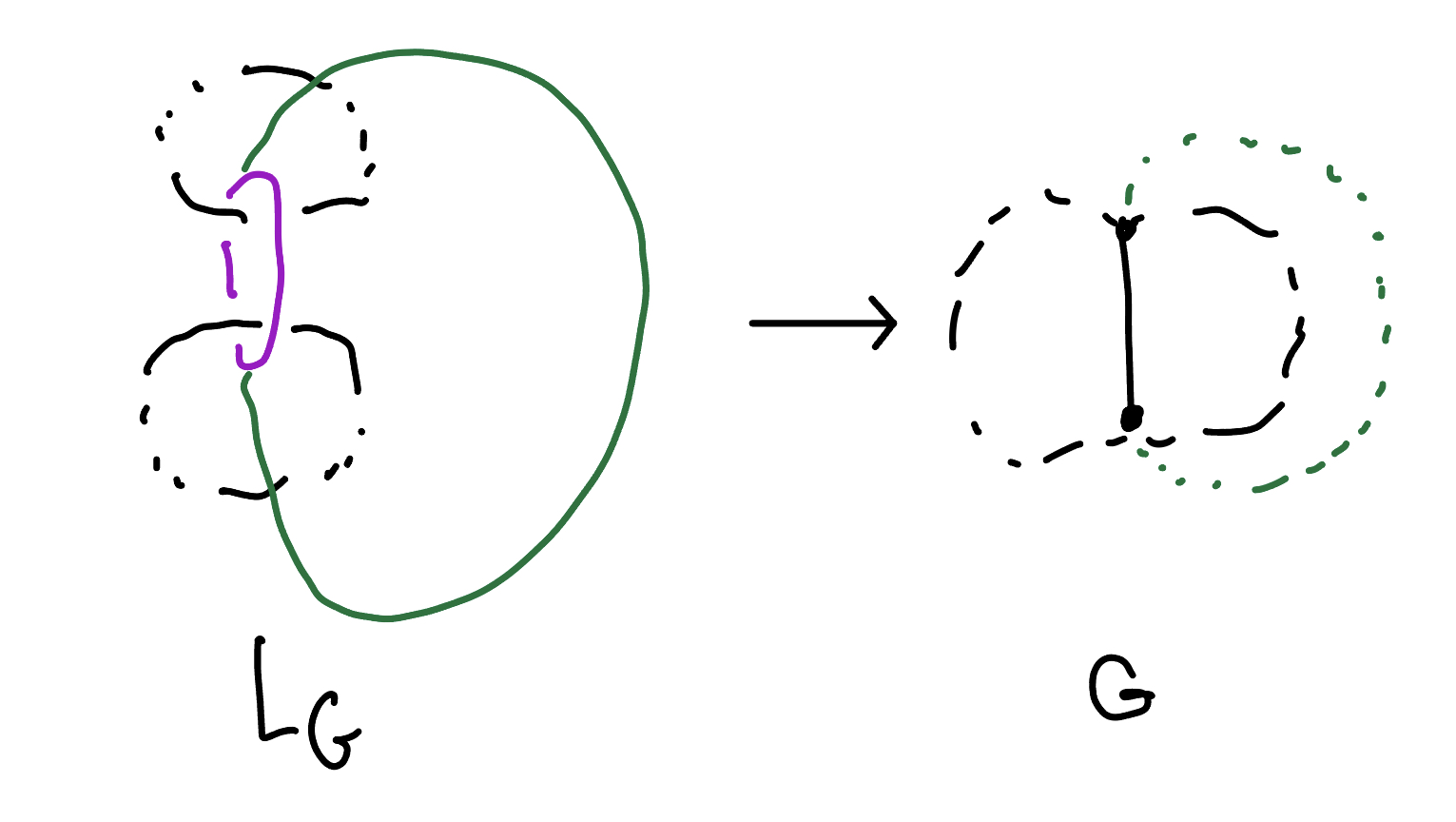}

\caption{A flat augmented chainlink with two crossing disks must have a separating edge } \label{crossingdisk}

\end{figure}
\end{center}

Choose a simplicial planar graph $G$ which contains no triangles and no vertices of degree 2 and no edge that separates the graph.  By \cite[Proposition 2.7]{millichap2023flat}, the only thrice punctured spheres in the  fully augmented chainmail link complement associated to $G$  are crossing disks. Moreover, each crossing circle bounds a unique twice-punctured disk intersecting two knot loops (one can see that for a flat augmented chainmail link that a second crossing circle would give a separating edge, see Figure \ref{crossingdisk}). We may also assume that these have a unique reflection surface, since our assumptions rule out the examples in that paper with multiple reflection surfaces \cite[Theorem 1.2]{millichap2023flat}. Thus, any isometry of $S^3-L$ must send the reflection surface to the reflection surface and the crossing disks to crossing disks. Each loop in the reflection surface bounds a disk punctured by crossing loops and intersecting the crossing disks in arcs. One may recover the graph $G$ from this pattern of intersections, as well as its planar embedding from the reflection surface. So the only orientation preserving symmetries of the link complement must preserve the graph fixing each edge and hence are trivial. There is an orientation reversing symmetry by reflecting in the reflection surface, but this will not preserve the slopes of the fillings of the crossing loops, and hence does not extend to a symmetry of the alternating link. Thus we have shown the existence of negative alternating chainmail links which admit no non-trivial symmetry. 
\end{proof} 

{\bf Remark:} Further Dehn fillings on asymmetric negative alternating chain links give rise to more examples of asymmetric $L$-spaces. Compare to \cite[Theorem 1.2]{MR3507256} and \cite{anderson2021lspace} which require computation to verify the results, and  \cite{MR4194294} which uses an intricate construction to give asymmetric $L$-space knots. 

\section{Alternating chainmail links are fibered}
It was proved that $L$-space knots are  fibered \cite{MR2357503}. Moreover, all of the examples in the literature of $L$-space links have fibered complement. Note that \cite[Example 3.9]{MR3692910} shows that the links do not admit fibered Seifert surfaces; nevertheless the complements fiber. 

\begin{theorem} \label{fibered}
Non-split negative alternating chainmail links have fibered complement.
\end{theorem}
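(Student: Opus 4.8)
The plan is to exhibit the fiber of the complement directly as an iterated Murasugi sum (plumbing) of Hopf bands indexed by the graph $G$, and then to invoke Stallings' theorem that a Murasugi sum of fiber surfaces is again a fiber surface (Gabai later showed this operation is geometrically natural). The key point is that each Hopf band is the fiber surface of a Hopf link, so \emph{any} surface built by iterated Hopf plumbing is automatically a fiber surface, whether the plumbing pattern is a tree or contains cycles: each individual plumbing is a legitimate $4$-gon Murasugi sum carried out inside its own $3$-ball, and iterating such sums never leaves the fibered world. Note that the vertex weights are irrelevant here, since they only record surgery framings and do not affect the link type of $L_G$; only the edges (clasps) and the connectivity of $G$ matter.

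First I would normalize. Since we care only about the complement, loops correspond to nugatory self-clasps that may be removed by Reidemeister moves, and a weight $-k$ clasped ribbon is a twist region whose spanning surface is itself a linear chain of $|k|$ negative Hopf bands; hence it suffices to treat each edge as contributing a chain of negative Hopf bands. The non-split hypothesis is precisely the statement that $G$ is connected, which I would use to ensure the resulting plumbing surface is connected (a disconnected $G$ produces a split, hence non-fibered, link).

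Next I would construct the candidate fiber surface $F$. For each vertex $v_i$ take the disk $D_i$ bounded by the planar unknot $L_i$, pushed to distinct heights so that the $D_i$ are disjoint, and for each edge $e=(v_i,v_j)$ attach the annular band realizing the negative clasp between $L_i$ and $L_j$. This yields a spanning surface with $\partial F = L_G$. The crucial role of the \emph{alternating} (same-sign) hypothesis is that, because all edge weights share the negative sign and the vertex circles carry compatible counterclockwise orientations, $F$ is orientable, i.e. an honest Seifert surface rather than a mere checkerboard surface. I would then read off the plumbing structure: the band associated to each edge, together with the adjacent pieces of the two endpoint disks, forms a negative Hopf band, and two such Hopf bands sharing a vertex disk are plumbed along a square inside that disk.

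The main obstacle is this last step: verifying rigorously that $F$ decomposes as an \emph{iterated} Murasugi sum, i.e. that at each stage the relevant Hopf band lies in a ball meeting the rest of $F$ in a single plumbing square, with the separating sphere correctly positioned. This is visually clear from the planar picture but requires care when many edges meet at a single vertex and when $G$ contains cycles. I expect to handle it by building $F$ one edge at a time along a chosen planar embedding of $G$, checking inductively that attaching each new edge's Hopf band is a plumbing onto the surface already constructed, the planarity being what allows the separating spheres to be chosen disjointly. Once this decomposition is established, Stallings' theorem gives that $F$ is a fiber surface, and hence $S^3\setminus L_G$ fibers over the circle, which is even slightly stronger than the asserted conclusion since it produces a fibered Seifert surface.
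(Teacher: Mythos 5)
Your plan proves something strictly stronger than the theorem, and that stronger statement is false once $G$ contains a cycle. The surface you build (vertex disks joined by clasp bands, with the counterclockwise orientation) is a Seifert surface for $L_G$, so if it were a fiber then $L_G$ would be a fibered link in the classical sense, i.e.\ the Seifert class $\mu$ would be a fibered class. Take $G$ to be a triangle: $L_G$ is then the alternating closed $3$-chain link (three unknots pairwise clasped; its complement is the ``magic'' hyperbolic manifold). A Fox-calculus computation from the Wirtinger presentation gives its multivariable Alexander polynomial, up to units, as $\Delta = xy+yz+zx-x-y-z$. With respect to the Seifert class $(1,1,1)$ the top-degree part is $xy+yz+zx$, three monomials rather than a single unit monomial; equivalently, the one-variable Alexander polynomial of this class is $3t(t-1)$ up to units, with leading coefficient $3$. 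Since a fibered class must have monic Alexander polynomial, $(1,1,1)$ is not a fibered class, so your plumbing surface is not a fiber, and indeed no Seifert surface for this orientation can be. This is exactly the distinction the paper is careful about: it notes (citing Liu's Example 3.9) that these links need not admit fibered Seifert surfaces even though their complements fiber, and accordingly its proof produces a fibration in a class $k\mu+\alpha$ that is \emph{not} the Seifert class --- the fiber has non-longitudinal boundary slopes. That is not a stylistic choice; it is forced by examples like the triangle.

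The step where your argument breaks is precisely the one you flagged as the ``main obstacle,'' and it is not a technical gap that more care can close. When $G$ has a cycle, the last Hopf band to be attached meets the union of the previously assembled surface in \emph{two} plumbing squares, and there is no $2$-sphere splitting $S^3$ into two balls each containing one summand and meeting the surface in a single polygon; Stallings' theorem genuinely requires that configuration, and cyclic ``plumbing'' is not a Murasugi sum. The counterexample above shows no repaired version of the inductive step can exist, because the conclusion it would establish is false. Your argument is fine for trees (there the surface really is an iterated Hopf plumbing, and tree chainmail links are honest fibered links), but the content of the theorem lies in the cyclic case. The paper handles it with different technology: it keeps the same checkerboard/Seifert surface, but then works with the complementary sutured handlebody, which Gabai proved is disc-decomposable, extends the class dual to the decomposing disks to a class $\alpha$ on the link exterior, and applies the guts criterion of Agol--Zhang to conclude that $k\mu+\alpha$ lies in a fibered cone. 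Any correct proof has to leave the world of Seifert surfaces in this way.
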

\begin{proof}
Take the checkerboard surface for the alternating chainmail link $L_G$ which is a Seifert surface by applying Seifert's algorithm to the orientation of the link in which all of the loops are oriented counterclockwise (Figure \ref{Seifert}). The complement of this surface is a sutured handlebody $(H,\gamma)$ which Gabai proved is disc-decomposable \cite[Theorem 6.1]{MR780731}. Moreover, one sees that the restriction maps $H^1(S^3-L_G) \to H^1(H)$ is surjective. Choose a cutset of oriented disks $D\subset H$ giving a taut sutured decomposition, and a cohomology class $\alpha'\in H^1(H)$ Poincar\'e dual to $[D,\partial D] \subset H_2(H, \partial H)$. Then there exists a cohomology class $\alpha\in H^1(S^3-L_G)$ so that $\alpha_{|H}=\alpha'$. Let $\mu \in H^1(S^3-L_G)$ denote the cohomology class dual to the Seifert surface. Then $k \mu+ \alpha$ will lie in a cone over a face of the Thurston norm ball adjacent to that containing $\mu$. By  \cite[Theorem 1.4]{AgolZhang} the guts $\Gamma(k\mu+\alpha) = \Gamma(\alpha') = \emptyset$, and hence $k\mu+\alpha$ is a fibered class (see \cite[Definition 2.17]{AgolZhang} for the definition of {\it guts}). 
\end{proof} 

\begin{center}
\begin{figure}[htb]\includegraphics[width=3in]{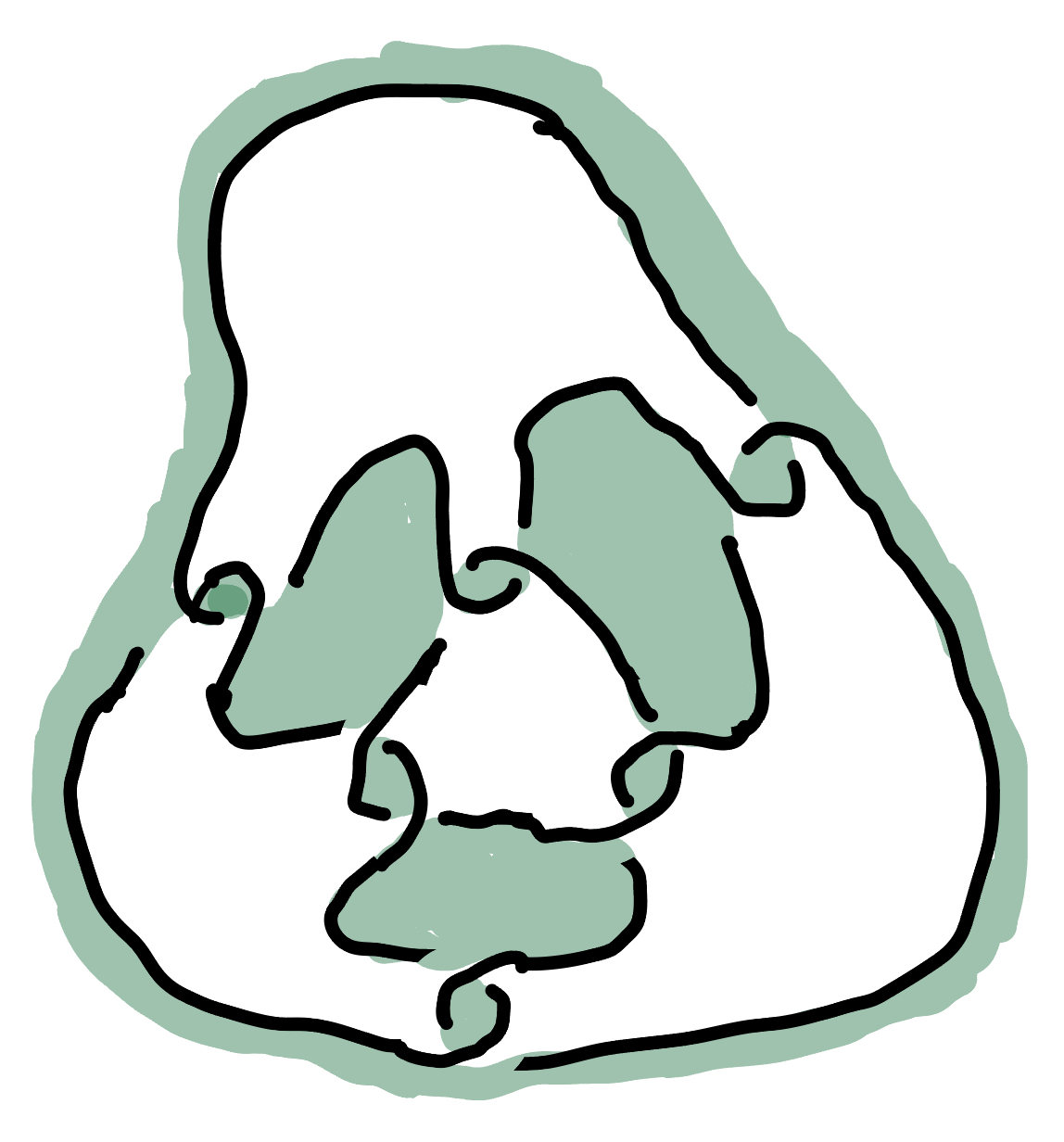}

\caption{A Seifert surface for a chainmail link} \label{Seifert}

\end{figure}
\end{center}

\section{Conclusion}
We conclude with some questions and challenges stemming from the examples in this paper.

\begin{enumerate}
\item
Are any of the new examples of $L$-spaces described in this paper strong $L$-spaces \cite{Greene_2016}? 
\item
Are there non-fibered $L$-space link complements?
\item
Describe all $L$-space surgeries on alternating chainmail and augmented chainmail links. Since this includes all 3-manifolds, such a characterization will include a description of all $L$-spaces and thus is likely to be intricate \cite{GORSKY2018386}. 

\item
Find the foliar surgeries on these links. If the $L$-space conjecture is true, then this should consist of the complement of the reducible and $L$-space fillings \cite[Conjecture 5]{MR3381327}. 

\item
Prove that the $L$-space surgeries on alternating and augmented chainmail links described in these theorems have non-orderable fundamental group (Question (1) is relevant if there is a positive answer by \cite{levine_lewallen_2013}). If the $L$-space conjecture is true, then one would expect the fundamental groups to be non-orderable. 

\item
Presumably one could prove Theorem \ref{fibered} by computing the multi-variable Alexander polynomial of negative alternating chainmail links which allows one to detect if the link complement fibers \cite[Corollary 1.2]{MR2357503}, \cite{MR2507641}, \cite[Theorem 1.2]{MR2393424}. It might be interesting to find a formula for these polynomials and use it to reprove Theorem \ref{fibered}. It would also be interesting to know which augmented alternating links have fibered complement?

\item
Are there more $L$-space links that may be found using this approach? Which chainmail links are (generalized) $L$-space links? Complete this analogy: 

\begin{center} 
alternating links: alternating chainmail links :: quasi-alternating links: ? 
\end{center}
\end{enumerate}

%\bibliographystyle{../math}
%\bibliography{../refs.bib}
\bibliography{Chainmail_links.bib}
\end{document}